\numberwithin{equation}{section} 
\numberwithin{figure}{section} 
\theoremstyle{plain}
\theoremstyle{plain}
\newtheorem{thm}{Theorem}
  \theoremstyle{plain}
  \newtheorem{cor}[thm]{Corollary}
  \theoremstyle{plain}
  \newtheorem{conjecture}[thm]{Conjecture}
  \theoremstyle{plain}
  \newtheorem{lem}[thm]{Lemma}
  \theoremstyle{definition}
  \newtheorem{defn}[thm]{Definition}
  \theoremstyle{remark}
  \newtheorem{claim}[thm]{Claim}
\numberwithin{thm}{section}
\begin{document}

\title{A dense G-delta set of Riemannian metrics without the finite blocking
property}

\author{Marlies Gerber and Wah-Kwan Ku}
\begin{abstract}
A pair of points $(x,y)$ in a Riemannian manifold $(M,g)$ is said
to have the finite blocking property if there is a finite set $P\subset M\setminus\{x,y\}$
such that every geodesic segment from $x$ to $y$ passes through
a point of $P$. We show that for every closed $C^{\infty}$ manifold
$M$ of dimension at least two and every pair $(x,y)\in M\times M$,
there exists a dense $G_{\delta}$ set, $\mathcal{G}$, of $C^{\infty}$
Riemannian metrics on $M$ such that $(x,y)$ fails to have the finite
blocking property for every $g\in\mathcal{G}$.
\end{abstract}

\email{gerber@indiana.edu, wku@indiana.edu}

\address{Department of Mathematics, Indiana University, Bloomington, IN 47405,
USA}

\maketitle

\section{Introduction}

\markboth{}{}Let $M$ be a closed $C^{\infty}$ manifold, and let
$g$ be a $C^{\infty}$ Riemannian metric on $M.$ We consider a geodesic
as a mapping $\gamma:I\to M$, where $I$ is an interval of positive
length, and $\gamma$ is parametrized by arc length. Two geodesics
$\gamma_{i}:I_{i}\to M$, $i=1,2$ will be considered to be the same
if and only if $\gamma_{1}=\gamma_{2}\circ\varphi$, where $\varphi$
is a translation that maps $I_{1}$ onto $I_{2}.$ Let $x$ and $y$
be points in $M$, possibly with $x=y$. When we say that a geodesic
$\gamma:[c,d]\to M$ is from $x$ to $y$, we mean $\gamma(c)=x$
and $\gamma(d)=y$.

Given a Riemannian metric $g$ on $M$, a ${\it blocking\ set}$ for
$(x,y)$ is defined to be a subset $P$ of $M\setminus\{x,y\}$ such
that every geodesic from $x$ to $y$ passes through a point in $P$.
The pair $(x,y)\in M\times M$ is said to have the \emph{finite blocking
property} for $g$ if there exists a finite blocking set for $(x,y)$.
If every $(x,y)\in M\times M$ has the finite blocking property, then
$(M,g)$ is called \emph{secure}. (See \cite{bib4} and \cite{bib5}
for an explanation of this terminology.) A Riemannian manifold $(M,g)$
is called \emph{insecure} if it is not secure, and it is called \emph{totally
insecure} if no pair $(x,y)$ has the finite blocking property. Furthermore,
it is called \emph{uniformly secure} if there exists a positive integer
$n$ such that any pair of points $(x,y)$ has a blocking set with
at most $n$ elements.

Given a manifold $M,$ it is natural to ask the following:

\noindent \textbf{Question.} \emph{Which pairs of points $(x,y)\in M\times M$
and which Riemannian metrics $g$ on $M$ are such that $(x,y)$ has
the finite blocking property for $g$?}

Our contribution in this direction is Theorem \ref{thm:main} below,
which says that any given pair of points $(x,y)$ fails to have the
finite blocking property for a dense $G_{\delta}$ set of metrics.
We will give the proof in Section 3.

We let $\mathbb{G}$ denote the set of $C^{\infty}$ Riemannian metrics
on $M$. For $k=1,2,\dots,\infty$, there exists a complete metric
on $\mathbb{G}$ whose topology coincides with the $C^{k}$ topology
on $\mathbb{G}$. In particular, the Baire category theorem applies
to $\mathbb{G}$ with the $C^{k}$ topology. When we refer to the
$C^{k}$ topology on $M\times\mathbb{G}$ or $M\times M\times\mathbb{G}$,
we mean the product topology, where we take the manifold topology
on $M$ and the $C^{k}$ topology on $\mathbb{G}$.
\begin{thm}
\label{thm:main} Let $M$ be a closed $C^{\infty}$ manifold of dimension
at least two, and let $\mathbb{G}$ be the space of $C^{\infty}$
Riemannian metrics on $M$. The following three statements hold.
\begin{enumerate}
\item \label{enu:thm1-part1}Let $x$ and $y$ be two points in $M$, possibly
with $x=y$. Let $\mathcal{G}=\{g\in\mathbb{G}:(x,y){\rm \ fails\ to\ have\ the\ finite\ blocking\ property\ for\ }g\}$.
Then $\mathcal{G}$ contains the intersection of a countable collection
of sets that are $C^{1}$-open and $C^{\infty}$-dense in $\mathbb{G}$.
Thus, $\mathcal{G}$ contains a dense $G_{\delta}$ set in the $C^{k}$
topology for $k=1,2,\dots,\infty.$
\item \label{enu:thm1-part2} Let $\tilde{\mathcal{G}}=\{(x,y,g)\in M\times M\times\mathbb{G}:(x,y){\rm \ fails\ to\ have\ the\ finite\ blocking}$
${\rm property\ for\ }g\}$. Then $\tilde{\mathcal{G}}$ contains
the intersection of a countable collection of sets that are $C^{1}$-open
and $C^{\infty}$-dense in $M\times M\times\mathbb{G}$. Thus, $\tilde{\mathcal{G}}$
contains a dense $G_{\delta}$ set in the $C^{k}$ topology for $k=1,2,\dots,\infty.$
\item \label{enu:thm1-part3} Let $\hat{\mathcal{G}}=\{(x,g)\in M\times\mathbb{G}:(x,x){\rm \ fails\ to\ have\ the\ finite\ blocking\ property}$
${\rm for\ }g\}$. Then $\hat{\mathcal{G}}$ contains the intersection
of a countable collection of sets that are $C^{1}$-open and $C^{\infty}$-dense
in $M\times\mathbb{G}$. Thus, $\hat{\mathcal{G}}$ contains a dense
$G_{\delta}$ set in the $C^{k}$ topology for $k=1,2,\dots,\infty.$
\end{enumerate}
\end{thm}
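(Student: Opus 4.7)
The plan is to express each of $\mathcal{G}, \tilde{\mathcal{G}}, \hat{\mathcal{G}}$ as containing a countable intersection of $C^1$-open, $C^\infty$-dense sets. I describe the construction for part \ref{enu:thm1-part1}; parts \ref{enu:thm1-part2} and \ref{enu:thm1-part3} follow by the same scheme with $(x,y)$ and $x$ promoted to additional parameters. For each $n \geq 1$, let $W_n$ be the set of $g \in \mathbb{G}$ admitting $n+1$ geodesic segments from $x$ to $y$ with pairwise disjoint interiors and with $y$ not conjugate to $x$ along any of them. If $g \in \bigcap_n W_n$ and $P \subset M \setminus \{x,y\}$ is finite of size $n$, then by pairwise disjointness each point of $P$ lies on at most one of the $n+1$ witnesses, so pigeonhole produces a witness avoiding $P$; hence $g \in \mathcal{G}$.

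The $C^1$-openness of $W_n$ follows from the implicit function theorem: for $g \in W_n$ with witnesses $\gamma_0, \ldots, \gamma_n$, nonconjugacy makes the map $(v, L) \mapsto \exp_x^g(L v)$ a local diffeomorphism at $(\dot\gamma_i(0), \mathrm{length}(\gamma_i))$, and the family of exponential maps varies $C^1$-smoothly with $g$. Thus each $\gamma_i$ deforms $C^1$-continuously with $g$, and pairwise disjointness of the compact arcs, together with nonconjugacy, are open conditions preserved for $g'$ close to $g$ in $C^1$.

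The $C^\infty$-density of $W_n$ is the main content. Given $g_0$ and a $C^\infty$-neighborhood $\mathcal{U}$, I produce $g \in \mathcal{U} \cap W_n$ as follows. Hopf--Rinow gives a minimizing geodesic $\gamma_0$ from $x$ to $y$ in $(M, g_0)$, and by an initial arbitrarily small perturbation I may assume $\gamma_0$ is nonconjugate. Using $\dim M \geq 2$, pick points $q_1, \ldots, q_n \in M \setminus (\gamma_0 \cup \{x,y\})$ with pairwise disjoint open neighborhoods $U_i \ni q_i$, all disjoint from $\gamma_0 \cup \{x,y\}$. For each $i$, I modify $g_0$ on $U_i$ by a $C^\infty$-small amount so that the modified metric admits an additional geodesic $\gamma_i$ from $x$ to $y$ passing through $U_i$; because the supports of these perturbations are disjoint and all disjoint from $\gamma_0$, the geodesic $\gamma_0$ is unaffected and the $\gamma_i$ coexist with pairwise disjoint interiors. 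The decisive ingredient here, and the main obstacle, is a \emph{creation lemma}: at a point $q$ off all existing geodesics from $x$ to $y$, a $C^\infty$-small, locally supported perturbation of the metric near $q$ suffices to produce a new geodesic from $x$ to $y$ passing through a prescribed neighborhood of $q$. A natural approach is to install a small conformal ``dimple'' near $q$ that shortens paths through the ball and then use a variational argument to locate a new critical point of energy close to the broken geodesic $x \to q \to y$; the technical challenge is making this argument quantitative so that the required perturbation can be made arbitrarily $C^\infty$-small while still yielding the additional geodesic and preserving nonconjugacy.

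For parts \ref{enu:thm1-part2} and \ref{enu:thm1-part3}, I define $\tilde W_n \subset M \times M \times \mathbb{G}$ and $\hat W_n \subset M \times \mathbb{G}$ analogously, with witnesses depending jointly on all coordinates. Openness in $(x', y')$ (respectively $x$) is part of the same implicit function theorem argument, and density only benefits from the extra freedom to perturb the base points. For part \ref{enu:thm1-part3} one uses geodesic loops at $x$ in place of geodesics from $x$ to $y$; any closed manifold admits at least one such loop, and the same scheme produces $n$ more with pairwise disjoint interiors.
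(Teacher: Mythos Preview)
Your scheme has two genuine gaps, both in the density argument.

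First, even granting your ``creation lemma,'' pairwise disjointness of the interiors does not follow. Your new geodesic $\gamma_i$ must travel from $x$ to $U_i$ and from $U_i$ to $y$; outside $U_i$ it is a geodesic for the (essentially) original metric, and nothing prevents $\gamma_i$ and $\gamma_j$ from intersecting outside $U_i\cup U_j$. The disjointness of the supports of the perturbations controls only what happens inside the $U_i$'s, not the long transit arcs. In fact, in dimension two the condition ``pairwise disjoint interiors'' is too strong to hold for arbitrarily many geodesics after a $C^\infty$-small perturbation: on a flat torus, distinct geodesics from $x$ to $y$ in different homotopy classes must cross transversally, and transversal intersections of curves in a surface are stable under small perturbations. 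So your $W_n$ will fail to be $C^\infty$-dense for large $n$ when $\dim M=2$.

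Second, the creation lemma itself is not plausible as stated. A $C^\infty$-small perturbation supported near $q$ can only bend geodesics that already enter that neighborhood; to produce a geodesic from $x$ to $y$ through $q$ you would need a geodesic from $x$ reaching $q$ and a geodesic from $q$ to $y$ whose velocity vectors at $q$ are \emph{nearly equal}, and there is no reason for such a near-coincidence at an arbitrary point $q$. The paper avoids this issue entirely: rather than creating geodesics at prescribed locations, it invokes Serre's theorem to obtain an additional geodesic $\gamma$ from $x$ to $y$ for free, and then perturbs the metric (via the merging/foliation Lemmas \ref{foliation merging}--\ref{lem:avoiding_Z}) so that $\gamma$ avoids the finitely many pairwise intersection points of the previously constructed geodesics. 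Crucially, the paper only demands that \emph{no three} of the geodesics be concurrent away from $\{x,y\}$ --- a condition that is both achievable inductively and sufficient for the pigeonhole (use $2m+1$ geodesics against a blocking set of size $m$). Replacing your disjointness requirement with this triple-concurrence condition, and replacing the creation lemma with Serre's theorem plus an avoidance lemma, is what makes the argument go through.
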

From (\ref{enu:thm1-part2}) and (\ref{enu:thm1-part3}), we can deduce
the following corollary.
\begin{cor}
Let $M$ be a closed $C^{\infty}$ manifold of dimension at least
two and suppose $k\in\{1,2,\dots,\infty\}.$
\begin{enumerate}
\item \label{enu:cor1-part1}There exists a dense $G_{\delta}$ set $\mathcal{G}_{1}$
in $\mathbb{G}$ with the $C^{k}$ topology, so that for each $g\in\mathcal{G}_{1}$,
there is a dense $G_{\delta}$ subset $\mathcal{R}_{1}:=\mathcal{R}_{1}(g)$
of $M\times M$ such that each $(x,y)\in\mathcal{R}_{1}$ fails to
have the finite blocking property for $g$.
\item \label{enu:cor1-part2}There exists a dense $G_{\delta}$ set $\mathcal{G}_{2}$
in $\mathbb{G}$ with the $C^{k}$ topology, so that for each $g\in\mathcal{G}_{2}$,
there is a dense $G_{\delta}$ subset $\mathcal{R}_{2}:=\mathcal{R}_{2}(g)\subseteq M$
such that for each $x\in\mathcal{R}_{2}$, $(x,x)$ fails to have
the finite blocking property for $g$.
\end{enumerate}
\end{cor}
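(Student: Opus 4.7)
The plan is to derive both parts of the corollary from Theorem~\ref{thm:main}(2)--(3) by a Kuratowski-Ulam (topological Fubini) style argument on the product spaces $M\times M\times\mathbb{G}$ and $M\times\mathbb{G}$. The underlying factors are Baire and $M\times M$, $M$ are second countable, which is all the argument needs. I will describe part~\eqref{enu:cor1-part1} in detail; part~\eqref{enu:cor1-part2} follows by an identical argument, replacing $M\times M$ by $M$ and $\tilde{\mathcal{G}}$ by $\hat{\mathcal{G}}$.

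\textbf{Plan for \eqref{enu:cor1-part1}.} Theorem~\ref{thm:main}(\ref{enu:thm1-part2}) furnishes a countable family $\{\tilde U_n\}_{n\ge 1}$ of $C^1$-open, $C^\infty$-dense subsets of $M\times M\times\mathbb{G}$ with $\bigcap_n\tilde U_n\subseteq\tilde{\mathcal{G}}$. Fix a countable basis $\{B_j\}_{j\ge 1}$ of nonempty open sets in $M\times M$, and let $\pi:M\times M\times\mathbb{G}\to\mathbb{G}$ be the projection. For each pair $(n,j)$ set $V_{n,j}:=\pi\bigl(\tilde U_n\cap(B_j\times\mathbb{G})\bigr)$. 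Openness of $V_{n,j}$ in the $C^k$ topology is immediate from openness of $\tilde U_n$ (projections of open sets are open), and $V_{n,j}$ is dense: given any nonempty open $V\subseteq\mathbb{G}$, the set $B_j\times V$ is open and nonempty, hence meets $\tilde U_n$ by density, giving a point of $V_{n,j}\cap V$. Therefore $\mathcal{G}_1:=\bigcap_{n,j}V_{n,j}$ is a dense $G_\delta$ subset of $\mathbb{G}$ by the Baire category theorem.

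For each $g\in\mathcal{G}_1$, the slice $(\tilde U_n)_g:=\{(x,y):(x,y,g)\in\tilde U_n\}$ is open in $M\times M$ (since $\tilde U_n$ is open) and meets every basic set $B_j$ (since $g\in V_{n,j}$), hence is dense. Setting $\mathcal{R}_1(g):=\bigcap_n(\tilde U_n)_g$ therefore yields a dense $G_\delta$ subset of $M\times M$; by construction $\mathcal{R}_1(g)\subseteq\tilde{\mathcal{G}}_g$, so every $(x,y)\in\mathcal{R}_1(g)$ fails the finite blocking property for $g$. Part~\eqref{enu:cor1-part2} is handled in precisely the same way, using the family supplied by Theorem~\ref{thm:main}(\ref{enu:thm1-part3}) in $M\times\mathbb{G}$, and reading off from the definition of $\hat{\mathcal{G}}$ that each $x\in\mathcal{R}_2(g)$ has the property that $(x,x)$ fails the finite blocking property for $g$.

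\textbf{Anticipated difficulty.} There is essentially no serious obstacle: the only bookkeeping point is that the sets supplied by Theorem~\ref{thm:main} are $C^1$-open, while the corollary is stated for every $k\in\{1,2,\dots,\infty\}$. This is harmless, because the $C^k$ topology on $\mathbb{G}$ is finer than $C^1$ for $k\ge 1$, so the same $\tilde U_n$, and hence the same $V_{n,j}$ and the same $\mathcal{G}_1$, serves uniformly for every $k$.
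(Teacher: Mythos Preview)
Your argument is correct. The paper itself does not supply a proof of the corollary; it merely states that the corollary can be deduced from parts~(\ref{enu:thm1-part2}) and~(\ref{enu:thm1-part3}) of Theorem~\ref{thm:main}, so your Kuratowski--Ulam style slicing is exactly the kind of routine deduction the authors evidently had in mind and left to the reader.
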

V. Bangert and E. Gutkin obtained stronger results for the case when
the dimension of $M$ is two and the genus is positive \cite{bib12}.
They proved that if $M$ has genus greater than one, then every Riemannian
metric is totally insecure. Moreover, if $M$ has genus one, they
showed that non-flat metrics are insecure and a $C^{2}$-open, $C^{\infty}$-dense
set of metrics are totally insecure. These results provide evidence
that (c) follows from (a) in the following conjecture, which originally
appeared in \cite{bib5} and \cite{bib8}. A proof that (c) implies
(b) is given in \cite{bib6}.
\begin{conjecture}
\label{conjecture} Let $(M,g)$ be a closed $C^{\infty}$ Riemannian
manifold. The following statements are equivalent.
\begin{enumerate}
\item [(a)]$(M,g)$ is secure.
\item [(b)]$(M,g)$ is uniformly secure.
\item [(c)]$g$ is a flat metric.
\end{enumerate}
\end{conjecture}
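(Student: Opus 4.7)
The statement decomposes into three implications. Two of these are not the obstacle: $(b)\Rightarrow(a)$ is immediate from the definitions, and $(c)\Rightarrow(b)$ is the content of \cite{bib6}, where one uses the Bieberbach structure of the universal cover of a closed flat manifold together with an explicit midpoint construction to produce a uniform finite blocking set for every pair of points. The substantive content of the conjecture is therefore $(a)\Rightarrow(c)$: security of $(M,g)$ forces $g$ to be flat. My plan is to obtain this implication by routing through the chain \emph{secure} $\Rightarrow$ \emph{uniformly secure} $\Rightarrow$ \emph{zero volume entropy} $\Rightarrow$ \emph{flat}.

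The first arrow, $(a)\Rightarrow(b)$, is already subtle because the pointwise blocking number $n(x,y)$ need not vary nicely; the natural attempt is to show that $n$ is upper semicontinuous on $M\times M$ (a small perturbation of $(x,y)$ can only introduce new geodesics locally, which are blocked by a slight enlargement of the given finite blocking set) and then invoke compactness. The second arrow is a counting argument in the universal cover $\widetilde{M}$: with a uniform bound $n$ on blocking numbers, each geodesic segment from a lift $\widetilde{x}$ to a deck translate of $\widetilde{y}$ of length at most $L$ must pass through one of finitely many lifts of a blocker, and by an iterative pigeonhole argument (variants of which appear in work of Gutkin, Burns--Gutkin, and Lafont--Schmidt) one extracts a polynomial upper bound on the number of such segments. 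Since the counting function for geodesic arcs between fixed points in $\widetilde{M}$ has the same exponential rate as the volume growth of balls in $\widetilde{M}$, this forces $h_{\mathrm{vol}}(M,g)=0$.

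The third arrow, \emph{zero volume entropy} $\Rightarrow$ \emph{flat}, is the principal obstacle and the reason the statement remains conjectural in full generality. It is known unconditionally only for tori, via Burago--Ivanov's theorem on the minimal asymptotic volume of Riemannian $n$-tori, and in dimension two in the form exploited by Bangert--Gutkin \cite{bib12}. For other topologies there is no comparable rigidity theorem: zero volume entropy does not even force $\pi_{1}(M)$ to be virtually nilpotent without extra hypotheses, let alone the metric to be flat. Consequently, the outline above would succeed in dimension two (recovering \cite{bib12}) and on tori of arbitrary dimension, but for a general closed $C^{\infty}$ manifold one needs either a new entropy-rigidity theorem playing the role of Burago--Ivanov away from tori, or an entirely different route that converts a hypothetical non-flat feature of $g$ into an explicit construction of geodesics between some pair $(x,y)$ evading every finite candidate blocking set. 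Designing such a construction, even conditionally, would be my main focus after carrying out the first two arrows.
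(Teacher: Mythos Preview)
The paper offers no proof of this statement: it is explicitly labeled a \emph{conjecture}, attributed to \cite{bib5} and \cite{bib8}, and the only deductive relation the paper asserts is that $(c)\Rightarrow(b)$ is proved in \cite{bib6}. So there is no ``paper's own proof'' to compare your proposal against, and you yourself correctly flag the statement as open (``the reason the statement remains conjectural in full generality''). Your write-up is best read as a survey of what is known and a research outline, not as a proof attempt, and on that level it is broadly accurate.

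Two remarks on the outline itself. First, your step $(a)\Rightarrow(b)$ is not merely ``subtle'' --- it is, as far as I know, genuinely open, and the upper-semicontinuity heuristic you sketch does not obviously go through: a small perturbation of $(x,y)$ can create new geodesics that are not close to any old one (think of conjugate points), so enlarging a given blocking set slightly need not catch them. The literature you cite (Burns--Gutkin, Lafont--Schmidt) extracts polynomial growth of connecting geodesics from \emph{uniform} security, not from mere security, precisely because no one has bridged that gap. Second, even granting $(b)$, the passage from polynomial growth of geodesic arcs to $h_{\mathrm{vol}}=0$ and then to flatness is, as you say, only available on tori (Burago--Ivanov) and surfaces; your honest acknowledgment that this is the essential obstruction is exactly right, and is consistent with the paper's own framing of the Bangert--Gutkin results \cite{bib12} as \emph{evidence} for, rather than a proof of, the implication $(a)\Rightarrow(c)$.
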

While Conjecture \ref{conjecture} concerns the finite blocking property
for \emph{all pairs of points}, Theorem \ref{thm:main} shows that
the finite blocking property can be destroyed for \emph{any given
pair of points}, under some small perturbation of metric.

In the next section, we will present some results which will be used
to prove Theorem \ref{thm:main}. We refer the reader to \cite{bib3}
for background information about geodesics and conjugate points.

We thank  Chris Connell for a helpful conversation that led to an
improvement to our original version of Theorem \ref{thm:main}.

\section{Some preliminary results}

We begin with the following classical result by J. P. Serre \cite{bib1},
\cite{bib13}, \cite{bib14}, \cite{bib11}.
\begin{thm}
\label{Serre} Let $(M,g)$ be a closed $C^{\infty}$-Riemannian manifold,
and let $x,y\in M$. Then there exist infinitely many geodesics from
$x$ to $y$.
\end{thm}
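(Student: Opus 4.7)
The plan is to prove the theorem via Morse theory on the path space, in the classical Morse--Serre framework. Let $\Omega_{x,y}M$ denote the Hilbert manifold of $H^{1}$ paths $\gamma:[0,1]\to M$ with $\gamma(0)=x$ and $\gamma(1)=y$, and let $E(\gamma)=\tfrac12\int_{0}^{1}|\dot\gamma(t)|^{2}\,dt$ be the energy functional. Standard calculus of variations on the compact manifold $M$ shows that the critical points of $E$ are exactly the geodesics from $x$ to $y$ (up to affine reparametrization), and that $E$ is bounded below and satisfies the Palais--Smale condition on each connected component of $\Omega_{x,y}M$. So it suffices to exhibit infinitely many critical points of $E$.

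I would split into two cases according to the size of $\pi_{1}(M)$. If $\pi_{1}(M)$ is infinite, then $\Omega_{x,y}M$ has infinitely many connected components, one for each class in $\pi_{1}(M,x)$, since $M$ is path-connected. On each nonempty component $E$ attains its infimum at a critical point, yielding infinitely many distinct geodesics from $x$ to $y$.

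If instead $\pi_{1}(M)$ is finite, pass to the universal cover $\pi:\tilde M\to M$, a closed simply connected $C^{\infty}$ manifold equipped with the pulled-back metric. Fix a lift $\tilde x$ of $x$; it is enough to produce infinitely many geodesics from $\tilde x$ to the (finitely many) lifts of $y$, because distinct geodesics upstairs project to distinct geodesics downstairs, and a pigeonhole argument then selects one endpoint receiving infinitely many. At this step I invoke Serre's theorem in algebraic topology: for a simply connected finite CW complex that is not contractible, the singular homology $H_{*}(\Omega \tilde M;\mathbb{F})$ is nonzero in infinitely many degrees over any field $\mathbb{F}$. Since $\Omega_{\tilde x,\tilde y}\tilde M$ is homotopy equivalent (by a canonical path-concatenation map) to the based loop space $\Omega \tilde M$, its Betti numbers are unbounded.

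The main obstacle is converting this topological input into infinitely many critical points of $E$, complicated by the potential \emph{degeneracy} of critical points when $\tilde x$ and $\tilde y$ are conjugate along some geodesic (so that the Morse lemma does not apply directly). I would circumvent this by the Lusternik--Schnirelmann minimax method rather than the strict Morse inequalities: combining the Palais--Smale condition with deformation along the negative-gradient flow of $E$ shows that if $E$ had only finitely many critical values, then the successive sublevel sets $\{E\le c\}$ would deformation-retract across each critical value onto spaces whose singular homology is controlled (up to finite rank) by the finitely many critical submanifolds, contradicting the unbounded Betti numbers supplied by Serre's theorem. Hence $E$ must have infinitely many critical values, and therefore infinitely many distinct geodesics connect $\tilde x$ to the lifts of $y$, proving the result.
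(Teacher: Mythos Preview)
The paper does not prove this theorem; it states it as a classical result of Serre and simply cites the original paper together with standard references (Milnor's \emph{Morse Theory}, Berger, Paternain). Your sketch is essentially the classical Morse-theoretic argument found in those references, so there is nothing to compare against.

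Two small comments on the sketch itself. In the finite $\pi_{1}$ case the pigeonhole step is unnecessary: fix a single lift $\tilde y$ of $y$ and produce infinitely many geodesics in $\tilde M$ from $\tilde x$ to $\tilde y$; distinct such geodesics project to distinct geodesics from $x$ to $y$ because any two lifts of the same geodesic differ by a deck transformation, and the only deck transformation fixing $\tilde x$ is the identity. Second, the passage from ``unbounded Betti numbers'' to ``infinitely many critical points'' is slightly loose as written. You assume finitely many critical \emph{values}, whereas the hypothesis you want to contradict is finitely many critical \emph{points}; and the phrase ``critical submanifolds'' presupposes a Morse--Bott structure that need not hold. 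The cleaner statement is that each geodesic has finite Morse index (by the Morse Index Theorem), so if there were only finitely many geodesics the path space would have the homotopy type of a finite CW complex---even in the degenerate case, either by the Gromoll--Meyer local analysis of isolated critical points or by Milnor's finite-dimensional approximation and a perturbation of the endpoint to a non-conjugate one---contradicting Serre's computation of $H_{*}(\Omega\tilde M)$.
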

The following lemma allows us to {}``merge'' two foliations by geodesics
for a Riemannian metric $g$ into a new foliation by geodesics for
a small perturbation of $g$, provided the two original foliations
are $C^{\infty}$-close.

For $a,b>0$, we let $I_{a}$ denote the open interval $(-a,a)\subset\mathbb{R}$,
and we let $B_{b}$ denote the open ball $\{\mathbf{w}\in\mathbb{R}^{n-1}:|\mathbf{w}|<b\}$,
where $n$ is the dimension of the manifold $M$ under consideration.
\begin{lem}
\label{foliation merging} Let $(M,g)$ be a closed $C^{\infty}$
Riemannian manifold of dimension $n\ge2$, and let $\mathbb{G}$ be
the set of $C^{\infty}$ Riemannian metrics on $M$. Suppose $\mathcal{N}$
is an open neighborhood of $g$ in $\mathbb{G}$ with the $C^{\infty}$
topology. Choose $a,b>0$, and let $\mathcal{F}=\{f\in C^{\infty}(I_{a}\times B_{b},M)$
$|$ $f$ satisfies \emph{(i), (ii), (iii)} below $\}$.
\begin{enumerate}
\item [(i)]The map $f$ is a $C^{\infty}$-diffeomorphism onto its image.
\item [(ii)]For all $\mathbf{p}\in B_{b}$ , the map $t\mapsto f(t,\mathbf{p})$
, for $t\in I_{a}$, is a geodesic (for the metric $g$).
\item [(iii)]For all $t\in I_{a}$, the $(n-1)$-dimensional submanifold
$\{f(t,\mathbf{p}):\mathbf{p}\in B_{b}\}$ is perpendicular (in the
metric $g$) to all the geodesics in ${\rm (ii)}$.
\end{enumerate}
We consider $\mathcal{F}$ with the relative topology induced from
the $C^{\infty}$compact-open topology on $C^{\infty}(I_{a}\times B_{b},M)$.
Suppose $f_{0}\in\mathcal{F}$. Then there exists an open neighborhood
$\mathcal{F}_{0}\subseteq\mathcal{F}$ of $f$ such that for all $f_{1},f_{2}\in\mathcal{F}_{0}$,
there exists $\tilde{g}\in\mathcal{N}$ such that the following conditions
are satisfied.
\begin{enumerate}
\item $\tilde{g}$ agrees with $g$ on the complement of $f_{1}(I_{a/2}\times B_{b/2})\cap f_{2}(I_{a/2}\times B_{b/2})$.
\item There is a family of $\tilde{g}$-geodesics $\gamma_{\mathbf{p}}:I_{a}\rightarrow f_{1}(I_{a}\times B_{b})\cup f_{2}(I_{a}\times B_{b})$,
for $\mathbf{p}\in B_{b/4}$, such that \[
\gamma_{\mathbf{p}}(t)=\begin{cases}
f_{1}(t,\mathbf{p}), & \text{if $t\in(-a,-a/4)$;}\\
f_{2}(t,\mathbf{p}), & \text{if }t\in(a/4,a).\end{cases}\]

\item If $f_{1}(t,\mathbf{0})=f_{2}(t,\mathbf{0})$ for all $t\in I_{a}$,
then $\gamma_{\mathbf{0}}(t)=f_{1}(t,\mathbf{0})$. This implies that
the map $t\mapsto f_{1}(t,\mathbf{0})$ for $t\in I_{a}$, is a geodesic
for $\tilde{g}$ as well.
\end{enumerate}
\end{lem}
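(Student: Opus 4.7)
The plan is to build $\tilde g$ in two steps. First, I would explicitly interpolate the two $g$-foliations to produce a smooth one-parameter family of curves $\gamma_{\mathbf p}$ that agree with $f_{1}(\cdot,\mathbf p)$ for $t\le -a/4$ and with $f_{2}(\cdot,\mathbf p)$ for $t\ge a/4$. Second, I would modify $g$ only in a small neighbourhood of the transition region, and only so as to give $\tilde g$ the Fermi form $dt^{2}+h_{ij}(t,\mathbf p)dp^{i}dp^{j}$ in coordinates built from the $\gamma_{\mathbf p}$; such a metric has its $t$-coordinate lines as unit-speed geodesics automatically, because in these coordinates the Christoffel symbols $\Gamma^{\mu}_{tt}$ all vanish.

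For the first step I would fix a $C^{\infty}$ cutoff $\rho:I_{a}\to[0,1]$ with $\rho\equiv 0$ on $(-a,-a/4]$ and $\rho\equiv 1$ on $[a/4,a)$. Shrinking $\mathcal F_{0}$ around $f_{0}$ in $C^{\infty}$, I may assume that for all $f_{1},f_{2}\in\mathcal F_{0}$ the points $f_{1}(t,\mathbf p)$ and $f_{2}(t,\mathbf p)$ lie in a common $g$-convex neighbourhood, so there is a unique short vector $v(t,\mathbf p)\in T_{f_{1}(t,\mathbf p)}M$, smoothly depending on $(t,\mathbf p)$, with $\exp^{g}_{f_{1}(t,\mathbf p)}v(t,\mathbf p)=f_{2}(t,\mathbf p)$. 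Setting
\[
\gamma_{\mathbf p}(t):=\exp^{g}_{f_{1}(t,\mathbf p)}\!\bigl(\rho(t)v(t,\mathbf p)\bigr),
\]
the endpoint conditions hold by construction; and if $f_{1}(\cdot,\mathbf 0)\equiv f_{2}(\cdot,\mathbf 0)$, then $v(\cdot,\mathbf 0)\equiv 0$ and so $\gamma_{\mathbf 0}=f_{1}(\cdot,\mathbf 0)$, which is condition (3). Since $\gamma_{\mathbf p}$ is $C^{\infty}$-close to the $g$-geodesic $f_{0}(\cdot,\mathbf p)$ uniformly in $\mathbf p$, a further shrinkage of $\mathcal F_{0}$ makes $\phi(t,\mathbf p):=\gamma_{\mathbf p}(t)$ a $C^{\infty}$-diffeomorphism on $I_{a}\times B_{b}$ by the inverse function theorem.

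In the chart $\phi$ I would expand $\phi^{*}g=G_{tt}dt^{2}+2G_{ti}\,dt\,dp^{i}+G_{ij}\,dp^{i}dp^{j}$. Because $\phi$ coincides with $f_{1}$ for $t\le -a/4$ and with $f_{2}$ for $t\ge a/4$, properties (ii)--(iii) force $G_{tt}\equiv 1$ and $G_{ti}\equiv 0$ outside the slab $[-a/4,a/4]\times B_{b}$. Put $\hat G:=dt^{2}+G_{ij}\,dp^{i}dp^{j}$, and choose a bump $\psi:B_{b}\to[0,1]$ that is identically $1$ on an open neighbourhood of $\overline{B_{b/4}}$ and supported strictly inside $B_{b/2}$. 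The new metric is
\[
\tilde G:=\phi^{*}g+\psi(\mathbf p)\bigl((1-G_{tt})dt^{2}-2G_{ti}\,dt\,dp^{i}\bigr),
\]
pushed forward by $\phi$ and extended by $g$ off $\phi(I_{a}\times B_{b})$. On the open set where $\psi\equiv 1$ the metric $\tilde G$ has exactly Fermi form $dt^{2}+G_{ij}dp^{i}dp^{j}$, so each $\gamma_{\mathbf p}$ with $\mathbf p\in B_{b/4}$ is a $\tilde g$-geodesic, giving (2). The perturbation $\tilde g-g$ is supported in $\phi([-a/4,a/4]\times\mathrm{supp}\,\psi)$, which sits inside $f_{1}(I_{a/2}\times B_{b/2})\cap f_{2}(I_{a/2}\times B_{b/2})$ as soon as $\mathcal F_{0}$ is small enough that $\phi$ is $C^{0}$-close to $f_{0}$; this is (1).

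The hard part will be the quantitative bookkeeping that forces $\tilde g\in\mathcal N$. The perturbation $\tilde G-\phi^{*}g=\psi\bigl((1-G_{tt})dt^{2}-2G_{ti}\,dt\,dp^{i}\bigr)$ is a fixed bump times the two quantities that measure exactly how far $\phi$ fails to be a Fermi chart for $g$; since $f_{0}$ is such a chart and $\phi$ is built from $f_{1},f_{2}$ via a smooth exponential formula, each $C^{k}$-seminorm of $1-G_{tt}$ and $G_{ti}$ on the slab is controlled by the corresponding $C^{k}$-distance from $f_{1},f_{2}$ to $f_{0}$. The delicate step is to combine finitely many shrinkages of $\mathcal F_{0}$ --- one for each seminorm defining the prescribed $\mathcal N$ --- with the requirements that the exponential interpolation is well defined, that $\phi$ is a global diffeomorphism on $I_{a}\times B_{b}$, and that the support of $\tilde g-g$ lies inside the prescribed intersection, all into a single open neighbourhood $\mathcal F_{0}$.
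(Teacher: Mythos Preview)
Your argument is sound and shares the paper's core idea---build an interpolated chart and impose a Fermi-form metric $dt^{2}+G_{ij}\,dp^{i}dp^{j}$ so that the $t$-lines are automatically unit-speed geodesics---but the two implementations diverge. You interpolate the curves geometrically via $\exp^{g}$ in $M$, whereas the paper interpolates linearly in the domain, setting $\phi(t,\mathbf{p})=(1-h(t))(t,\mathbf{p})+h(t)\,f_{1}^{-1}\!\circ f_{2}(t,\mathbf{p})$ and then $\hat f=f_{1}\circ\phi$. More significantly, you obtain $\tilde g$ by subtracting only the defective components $(1-G_{tt})\,dt^{2}$ and $2G_{ti}\,dt\,dp^{i}$ from $\phi^{*}g$, cut off by a bump $\psi(\mathbf{p})$ in the transverse variable; the paper instead defines a new metric $\hat g$ from scratch in the $\hat f$-chart (with transverse part an explicit $h(t)$-convex combination of the $f_{1}$- and $f_{2}$-pullbacks of $g$) and then glues $\hat g$ to $g$ with a bump $H$ supported on the fixed set $f_{0}(\overline{R_{3,3}})\subset M$, which is what forces the elaborate nested-rectangle bookkeeping $R_{i,j}=I_{a_{i}}\times B_{b_{j}}$. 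Your route makes the smallness of $\tilde g-g$ transparent, since the correction is a fixed bump times the Fermi defect, and that defect vanishes identically when $f_{1}=f_{2}=f_{0}$. One point you should tighten: the compact-open $C^{\infty}$ topology only controls $f_{1},f_{2}$ on compact subsets of $I_{a}\times B_{b}$, so your convexity, diffeomorphism, and containment requirements should all be stated on a fixed compact set such as $\overline{I_{a/2}}\times\overline{B_{b/2}}$; this is harmless because your perturbation is supported there, but the paper handles the same issue explicitly through its $R_{i,j}$ hierarchy.
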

\begin{proof}
Let $(a_{i})_{0\le i\le5}$ and $(b_{j})_{0\le j\le5}$ be strictly
decreasing sequences of positive numbers, where $a_{0}=a,a_{3}=a/2,a_{5}=a/4,b_{0}=b,b_{1}=b/2$,
and $b_{5}=b/4$. Let $R_{i,j}=I_{a_{i}}\times B_{b_{j}}$, for $0\le i,j\le5$.

Let $h:\mathbb{R}\rightarrow[0,1]$ be a $C^{\infty}$ function such
that \[
h(t)=\begin{cases}
0, & \text{if $t\leq-a_{5}$};\\
1, & \text{if $t\geq a_{5}$},\end{cases}\]
 and let $H:M\rightarrow[0,1]$ be a $C^{\infty}$ function such that
\[
H(x)=\begin{cases}
0, & \text{if $x\in M\setminus f_{0}(R_{3,3})$};\\
1, & \text{if $x\in f_{0}(\overline{R_{4,4}})$}.\end{cases}\]
 Given $f_{0}\in\mathcal{F}$, the required open neighborhood $\mathcal{F}_{0}$
will be chosen so that functions $f_{1,}f_{2}\in\mathcal{F}_{0}$
satisfy the properties given below. We begin by requiring $f_{1},f_{2}$
to be sufficiently close to $f_{0}$ in the $C^{0}$ topology so that

\begin{equation}
f_{2}(\overline{R_{i+1,j+1}})\subseteq f_{1}(R_{i,j})\text{ and }f_{1}(\overline{R_{i+1,j+1}})\subseteq f_{2}(R_{i,j}),\text{ for }0\le i,j\le4.\label{eq:rectangle-containment}\end{equation}

We define $\phi:\overline{R_{1,1}}\rightarrow R_{0,0}$ by \[
\phi(t,\mathbf{p})=(1-h(t))(t,\mathbf{p})+h(t)(f_{1}^{-1}\circ f_{2}(t,\mathbf{p}))\]
 for $(t,\mathbf{p})\in\overline{R_{1,1}}$, where `$+$' denotes
the usual vector addition in $\mathbb{R}^{n}$. We have $\phi(t,\mathbf{p})\in R_{0,0}$
, because $f_{1}^{-1}\circ f_{2}(\overline{R_{1,1}})\subseteq R_{0,0}$
(by $(\ref{eq:rectangle-containment})$), and $\phi(t,\mathbf{p})$
is a convex combination of $f_{1}^{-1}\circ f_{2}(t,\mathbf{p})$
and $(t,\mathbf{p})$.

Next we consider $\hat{f}:=f_{1}\circ\phi:\overline{R_{1,1}}\rightarrow f_{1}(R_{0,0})$.
If $f_{1}$ and $f_{2}$ are close to $f_{0}$ in $C^{\infty}(R_{0,0},M)$,
then $\phi$ is close to the inclusion map $\overline{R_{1,1}}\hookrightarrow R_{0,0}$
in $C^{\infty}(\overline{R_{1,1}},R_{0,0})$, and $\hat{f}$ is close
to $f_{0}$ in $C^{\infty}(\overline{R_{1,1}},M)$. We require $f_{1}$
and $f_{2}$ to be sufficiently close to $f_{0}$ in $C^{\infty}(R_{0,0},M)$
so that the following four conditions are satisfied: \begin{equation}
\hat{f}:\overline{R_{1,1}}\rightarrow M{\rm \text{ is a diffeomorphism onto its image,}}\label{eq:f-hat-is-diffeo}\end{equation}
\begin{equation}
\hat{f}(\overline{R_{i+1,j+1}})\subseteq f_{0}(R_{i,j})\cap f_{1}(R_{i,j})\cap f_{2}(R_{i,j})\text{, for }0\le i,j\le4,\label{eq:f-hat-rectangle}\end{equation}
\begin{equation}
f_{0}(\overline{R_{3,3}})\subseteq\hat{f}(R_{2,2}),\text{ and}\label{eq:f1_fhat}\end{equation}
\begin{equation}
(f_{1}((-a_{1},-a_{2}]\times B_{b_{2}})\cup f_{2}([a_{2},a_{1})\times B_{b_{2}}))\cap\hat{f}(\overline{R_{5,2}})=\emptyset.\label{eq:f1_f2_fhat}\end{equation}

For $(t,\mathbf{p})=(t,p_{1},\dots,p_{n-1})\in R_{2,2}$, we define
a Riemannian metric $\hat{g}$ at $\hat{f}(t,\mathbf{p})\in\hat{f}(R_{2,2})$
by\begin{align}
\hat{g}\left(\frac{\partial\hat{f}}{\partial t},\frac{\partial\hat{f}}{\partial t}\right) & =1,\label{eq:new_metric_unit_length}\\
\hat{g}\left(\frac{\partial\hat{f}}{\partial t},\frac{\partial\hat{f}}{\partial y_{k}}\right) & =0,\text{ and}\label{eq:new_metric_perpendicular}\\
\hat{g}\left(\frac{\partial\hat{f}}{\partial p_{k}},\frac{\partial\hat{f}}{\partial p_{l}}\right) & =[1-h(t)]g\left(\frac{\partial f_{1}}{\partial p_{k}},\frac{\partial f_{1}}{\partial p_{l}}\right)+h(t)g\left(\frac{\partial f_{2}}{\partial p_{k}},\frac{\partial f_{2}}{\partial p_{l}}\right),\nonumber \\
 & {\rm \ for\ }1\le k,l\le n-1.\nonumber \end{align}

We know that, for $i=0,1,2$, the original metric $g$ satisfies

\begin{align*}
g\left(\frac{\partial f_{i}}{\partial t},\frac{\partial f_{i}}{\partial t}\right) & =1,{\rm \ and}\\
g\left(\frac{\partial f_{i}}{\partial t},\frac{\partial f_{i}}{\partial p_{k}}\right) & =0,{\rm \ for\ }k=1,\dots,n-1,\end{align*}
 in the region $f_{i}(R_{0,0})$.

We define the required Riemannian metric as \[
\tilde{g}=H\hat{g}+(1-H)g,\]
 where we interpret $H\hat{g}$ to be $0$ when $H=0$.

If $(t,\mathbf{p})\in[-a_{1},-a_{5}]\times B_{b_{1}}$, then $h(t)=0$
and $\phi(t,\mathbf{p})=(t,\mathbf{p})$; if $(t,\mathbf{p})\in[a_{5},a_{1}]\times B_{b_{1}}$,
then $h(t)=1$ and $\phi(t,\mathbf{p})=f_{1}^{-1}\circ f_{2}(t,\mathbf{p})$.
Thus

\begin{equation}
\hat{f}(t,\mathbf{p})=\begin{cases}
f_{1}(t,\mathbf{p}), & \text{if $(t,{\bf \mathbf{p}})\in[-a_{1},-a_{5}]\times B_{b_{1}}$};\\
f_{2}(t,\mathbf{p}), & \text{if $(t,\mathbf{p})\in[a_{5},a_{1}]\times B_{b_{1}}$}.\end{cases}\label{eq:blending}\end{equation}
Therefore $\hat{g}$ agrees with $g$ on $\hat{f}(R_{2,2}\setminus R_{5,2})$.
If $f_{1}$ and $f_{2}$ are close to $f_{0}$ in $C^{\infty}(R_{0,0},M)$
, then $\hat{g}$ is $C^{\infty}$-close to $g$ on $\hat{f}(R_{2,2})\supseteq f_{0}(\overline{R_{3,3}})$.
Since $\tilde{g}=g$ on $M\setminus f_{0}(\overline{R_{3,3}})$, we
may choose $\mathcal{F}_{0}$ sufficiently small so that $\tilde{g}\in\mathcal{N}$
for $f_{1},f_{2}\in\mathcal{F}_{0}$.

To summarize, we have chosen $\mathcal{F}_{0}$ sufficiently small
so that if $f_{1},f_{2}\in\mathcal{F}_{0}$, then (\ref{eq:rectangle-containment}),(\ref{eq:f-hat-is-diffeo}),(\ref{eq:f-hat-rectangle}),(\ref{eq:f1_fhat}),
and (\ref{eq:f1_f2_fhat}) hold, and $\tilde{g}\in\mathcal{N}$.

Now we verify that (1), (2), and (3) hold.

The region where $\hat{g}$ is defined and not equal to $g$ is contained
in $\hat{f}(R_{5,2})$, which is a subset of $f_{1}(R_{3,1})\cap f_{2}(R_{3,1})$,
by (\ref{eq:f-hat-rectangle}). Therefore $\tilde{g}=g$ on the complement
of $f_{1}(R_{3,1})\cap f_{2}(R_{3,1})$, which is conclusion $(1)$.

Since $H=1$ on $f_{0}(\overline{R_{4,4}})\supseteq\hat{f}(R_{5,5})$,
we have $\tilde{g}=\hat{g}$ on $\hat{f}(R_{5,5})$. For each $\mathbf{p}\in B_{b_{5}}$,
we define a curve $\gamma_{\mathbf{p}}:I_{a}\rightarrow M$ as \[
\gamma_{\mathbf{p}}(t)=\begin{cases}
f_{1}(t,\mathbf{p}), & \text{if $t\in(-a,-a_{2}]$};\\
\hat{f}(t,\mathbf{p}), & \text{if $t\in(-a_{2},a_{2})$};\\
f_{2}(t,\mathbf{p}), & \text{if $t\in[a_{2},a)$}.\end{cases}\]
 It follows from (\ref{eq:blending}) that these curves are smooth.
Moreover, these curves are $\tilde{g}$-geodesics, because $\tilde{g}=g$
on $f_{1}((-a,-a_{2}]\times B_{b_{5}})\cup f_{2}([a_{2},a)\times B_{b_{5}})$
(by (\ref{eq:f1_f2_fhat})), $\hat{g}=g=\tilde{g}$ on $\hat{f}((\overline{I_{a_{2}}}\setminus I_{a_{5}})\times B_{b_{5}})=f_{1}([-a_{2},-a_{5}]\times B_{b_{5}})\cup f_{2}([a_{5},a_{2}]\times B_{b_{5}})$,
$\tilde{g}=\hat{g}$ on $\hat{f}(R_{5,5})$, and the curves $t\mapsto\hat{f}(t,\mathbf{p})$
are $\hat{g}$-geodesics for all $\mathbf{p}\in B_{b_{2}}$ (by (\ref{eq:new_metric_unit_length})
and (\ref{eq:new_metric_perpendicular})). This proves conclusion
(2). If $f_{1}(t,\mathbf{0})=f_{2}(t,\mathbf{0})$ for $t\in I_{a}$,
then $\phi(t,\mathbf{0})=(t,\mathbf{0})$ and $\hat{f}(t,\mathbf{0})=f_{1}(t,\mathbf{0})$
for $t\in\overline{I_{a_{1}}}$. Therefore the $\tilde{g}$-geodesic
$\gamma_{\mathbf{0}}$ is the same as $t\mapsto f_{1}(t,\mathbf{0})$,
which establishes (3).

\end{proof}
We now define a notion of ${\it merging}$ for two geodesics. This
will be used in Lemma \ref{lem:geodesic_merging} below.
\begin{defn}
\label{definition} Let $M$ be a $C^{\infty}$-manifold, and let
$g,\tilde{g}$ be Riemannian metrics on $M$. Suppose $U$ is an open
set in $M$, $t_{0}\in\mathbb{R}$, and $\gamma_{i}:[\hat{r}_{i},\hat{s}_{i}]\rightarrow M$,
$i=1,2$, are $g$-geodesics such that \begin{equation}
\{t\in[\hat{r}_{i},\hat{s}_{i}]:\gamma_{i}(t)\in U\}=(r_{i},s_{i}),{\rm {\rm {\rm \ where\ }}\hat{r}_{i}<r_{i}<t_{0}<s_{i}<\hat{s}_{i}.}\label{eq:merging_interval}\end{equation}
We say that a $\tilde{g}$-geodesic $\gamma:[\hat{r}_{1},\hat{s}_{2}]\rightarrow M$,
\emph{merges} $\gamma_{1}$ \emph{and} $\gamma_{2}$ \emph{within}
$U$ if there exist $\tilde{r},\tilde{s}$ such that $r_{1}<\tilde{r}<t_{0}<\tilde{s}<s_{2}$,
$\gamma(\tilde{r},\tilde{s})\subseteq U$, $\gamma(t)=\gamma_{1}(t)$
for $\hat{r}_{1}\leq t\leq\tilde{r}$, and $\gamma(t)=\gamma_{2}(t)$
for $\tilde{s}\leq t\leq\hat{s}_{2}$.
\end{defn}
The following lemma allows us to merge two geodesics according to
Definition \ref{definition}. K. Burns and G. Paternain have a similar
result in the 2-dimensional case \cite{bib2}.
\begin{lem}
\label{lem:geodesic_merging}Let $(M,g)$ be a closed $C^{\infty}$
Riemannian manifold of dimension $n\ge2$, and let $\mathcal{N}$
be an open neighborhood of $g$ in the $C^{\infty}$ topology. Suppose
$U$ is a convex (with respect to $g)$ open set in $M$ and $(x_{0},v_{0})\in T^{1}U.$
Then there exists an open neighborhood $\mathcal{V}$ of $(x_{0},v_{0})$
in $T^{1}U$ such that for any $(x_{i},v_{i})\in\mathcal{V}$, $i=1,2$,
if $\gamma_{i}:[\hat{r}_{i},\hat{s}_{i}]\to M$ are $g$-geodesics
that satisfy (\ref{eq:merging_interval}) and $(\gamma_{i}(t_{0}),\gamma_{i}'(t_{0}))=(x_{i},v_{i})$,
for $i=1,2$, then there exists $\tilde{g}\in\mathcal{N}$ which agrees
with $g$ on $M\setminus U$, and a $\tilde{g}$-geodesic $\gamma$
that merges $\gamma_{1}$ and $\gamma_{2}$ within $U$. \end{lem}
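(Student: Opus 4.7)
The plan is to reduce to Lemma \ref{foliation merging} by constructing a continuously varying family $\{f_{x,v}\}_{(x,v)\in\mathcal{V}'}$ of elements of $\mathcal{F}$, each with central geodesic $t\mapsto\exp_x(tv)$, and then applying the foliation-merging lemma to the pair $(f_{x_1,v_1},f_{x_2,v_2})$; the central leaf of the resulting $\tilde{g}$-foliation, extended on both sides by $\gamma_1$ and $\gamma_2$, will give the desired merging geodesic.

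To construct the family, pick an orthonormal basis $(e_1(x,v),\dots,e_{n-1}(x,v))$ of $v^\perp\subset T_xM$ depending smoothly on $(x,v)$ in a neighborhood $\mathcal{V}'$ of $(x_0,v_0)$ in $T^1M$ (e.g., by Gram--Schmidt applied to the orthogonal projection of a fixed basis of $v_0^\perp$). For small $a,b>0$, define
\[
f_{x,v}(t,\mathbf{p}) \;=\; \exp_{D_{x,v}(\mathbf{p})}\bigl(t\,n_{x,v}(\mathbf{p})\bigr),
\]
where $D_{x,v}(\mathbf{p})=\exp_x(\sum_k p_k\,e_k(x,v))$ and $n_{x,v}(\mathbf{p})$ is the unit normal field to the hypersurface $D_{x,v}$ at $D_{x,v}(\mathbf{p})$ pointing near $v$. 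Smooth dependence of the exponential map on footpoint and direction implies that $(x,v)\mapsto f_{x,v}$ is continuous in the $C^\infty$ compact-open topology. For $a,b$ small enough, $f_0:=f_{x_0,v_0}$ lies in $\mathcal{F}$: properties (i)--(ii) are automatic and (iii) is the generalized Gauss lemma for the normal exponential flow off a hypersurface.

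Apply Lemma \ref{foliation merging} to $f_0$ and $\mathcal{N}$ to obtain a neighborhood $\mathcal{F}_0\subseteq\mathcal{F}$. Shrink $a,b$ so that $f_0(\overline{I_a\times B_b})\subset U$, and then choose $\mathcal{V}\subseteq T^1U\cap\mathcal{V}'$ small enough that for every $(x,v)\in\mathcal{V}$: (a) $f_{x,v}\in\mathcal{F}_0$; (b) $f_{x,v}(\overline{I_a\times B_b})\subset U$; and (c) the $g$-geodesic $t\mapsto\exp_x(tv)$ stays in $U$ for all $t\in[-a,a]$. Condition (c) uses convexity of $U$ together with continuous dependence of $U$-exit times on initial data. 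For any geodesics $\gamma_i$ satisfying (\ref{eq:merging_interval}) with $(\gamma_i(t_0),\gamma_i'(t_0))=(x_i,v_i)\in\mathcal{V}$, this forces $[t_0-a,t_0+a]\subset(r_i,s_i)$ and $\gamma_i(t_0+t)=f_{x_i,v_i}(t,\mathbf{0})$ for $t\in I_a$.

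Finally, set $f_i:=f_{x_i,v_i}$ and apply Lemma \ref{foliation merging} to $f_1,f_2\in\mathcal{F}_0$: this yields $\tilde{g}\in\mathcal{N}$ with $\tilde{g}=g$ off $f_1(I_{a/2}\times B_{b/2})\cap f_2(I_{a/2}\times B_{b/2})\subseteq U$, and a $\tilde{g}$-geodesic $\gamma_{\mathbf{0}}:I_a\to M$ with $\gamma_{\mathbf{0}}(t)=\gamma_1(t_0+t)$ for $t\in(-a,-a/4)$ and $\gamma_{\mathbf{0}}(t)=\gamma_2(t_0+t)$ for $t\in(a/4,a)$. Define
\[
\gamma(t)=\begin{cases}\gamma_1(t), & \hat{r}_1\le t\le t_0-a/4,\\ \gamma_{\mathbf{0}}(t-t_0), & t_0-a/4\le t\le t_0+a/4,\\ \gamma_2(t), & t_0+a/4\le t\le\hat{s}_2.\end{cases}
\]
The three pieces agree on overlaps, so $\gamma$ is $C^\infty$; on the end intervals $\tilde{g}=g$ and $\gamma=\gamma_i$ is a $g$-geodesic, while on the middle $\gamma=\gamma_{\mathbf{0}}(\cdot-t_0)$ is a $\tilde{g}$-geodesic, so $\gamma$ is a $\tilde{g}$-geodesic throughout. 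Taking $\tilde{r}=t_0-a/4$ and $\tilde{s}=t_0+a/4$ fulfills Definition \ref{definition}. The main technical point is arranging a smooth choice of orthonormal frame that makes $(x,v)\mapsto f_{x,v}$ $C^\infty$-continuous; once that is in hand, the argument is a clean application of Lemma \ref{foliation merging} together with the convexity of $U$.
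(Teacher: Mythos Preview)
Your argument is correct and follows the paper's overall strategy: build a $C^\infty$-continuous family $\{f_{x,v}\}\subset\mathcal{F}$ whose central leaf is the geodesic through $(x,v)$, apply Lemma~\ref{foliation merging} to the pair $(f_{x_1,v_1},f_{x_2,v_2})$, and splice the resulting $\gamma_{\mathbf 0}$ with $\gamma_1$ and $\gamma_2$.  The difference is in how the foliations are produced.  The paper fixes a backward time $T$ with $\tilde x_0=\gamma_0(-T)$ non-conjugate to $x_0$, and takes $f_i(t,\mathbf p)=\exp_{\tilde x_i}\bigl(\Phi(\tilde x_i,(t+T)A_i\varphi(\mathbf p))\bigr)$, so the leaves are radial geodesics from a displaced basepoint and (iii) is the ordinary Gauss lemma; the rotation $A_i\in SO(n)$ and the point $\tilde x_i$ provide the continuous parameters.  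You instead use the normal exponential off a small hypersurface through $x$ orthogonal to $v$, verifying (iii) via the tube (generalized) Gauss lemma.  Your route avoids the non-conjugacy step and the auxiliary rotation, at the price of invoking the slightly less elementary Gauss lemma; otherwise the two proofs are parallel.  Two small points: you should choose $a,b$ with $f_0(\overline{I_a\times B_b})\subset U$ \emph{before} applying Lemma~\ref{foliation merging}, since $\mathcal{F}$ and hence $\mathcal{F}_0$ depend on $a,b$; and the assertion ``on the end intervals $\tilde g=g$'' is not literally true on $(t_0-a/2,\,t_0-a/4]$ (there $\gamma_1(t)=f_1(t-t_0,\mathbf 0)$ may sit inside the modification region), but the conclusion is unaffected because on that interval $\gamma_1$ coincides with the $\tilde g$-geodesic $\gamma_{\mathbf 0}(\cdot-t_0)$ by Lemma~\ref{foliation merging}(2).
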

\begin{proof}
Let $\gamma_{0}:[\hat{r}_{0},\hat{s}_{0}]\to M$ be a $g$-geodesic
such that $(\gamma_{0}(t_{0}),\gamma_{0}'(t_{0}))=(x_{0},v_{0})$
and (\ref{eq:merging_interval}) is satisfied for $i=0$ and some
choice of $r_{0},s_{0}$. By replacing $U$ by a smaller convex open
neighborhood of $x_{0}$, if necessary, we may assume there exist
$C^{\infty}$ orthonormal vector fields $E_{1},\dots,E_{n}$ on $U$
such that $E_{n}(\gamma_{0}(t))=\gamma_{0}'(t)$ for all $t\in(r_{0},s_{0})$.
We may assume that $t_{0}=0.$ Choose $T$ such that $0<T<|r_{0}|$
and $\tilde{x}_{0}:=\gamma_{0}(-T)$ is not conjugate to $x_{0}$
along $\gamma_{0}|[-T,0].$ For $u\in U$ and $\mathbf{z}=(z_{1},\dots,z_{n})\in\mathbb{R}^{n},$
let \begin{equation}
\Phi(u,{\bf \mathbf{z})}=z_{1}E_{1}(u)+\cdots+z_{n}E_{n}(u)\in T_{u}U.\label{eq:Phi_def}\end{equation}
 Define $\varphi:\{\mathbf{p}=(p_{1},\dots p_{n-1})\in\mathbb{R}^{n-1}:|\mathbf{p}|<1\}\to\{\mathbf{w}\in\mathbb{R}^{n}:|\mathbf{w}|=1\}$
by

\begin{equation}
\varphi(\mathbf{p})=(p_{1},\dots p_{n-1},1-(p_{1}^{2}+\cdots+p_{n-1}^{2})^{1/2}).\label{eq:varphi_def}\end{equation}
 Since $\tilde{x}_{0}$ and $x_{0}$ are not conjugate along $\gamma_{0}|[-T,0]$
, there exist $\tilde{a},\tilde{b}>0$ such that the map\[
f_{0}(t,\mathbf{p}):={\rm exp}_{\tilde{x}_{0}}(\Phi(\tilde{x}_{0},(t+T)\varphi(\mathbf{p}))),\]
defined for $(t,\mathbf{p})\in I_{\tilde{a}}\times B_{\tilde{b}}$,
is a $C^{\infty}$ diffeomorphism onto its image, and its image is
contained in $U.$ Note that $f_{0}(0,\mathbf{0})=x_{0}.$ Moreover,
there exist $a,b$ with $0<a<\tilde{a}$, $0<b<\tilde{b}$, an open
neighborhood $\mathcal{A}$ of $Id$ in $SO(n),$ and an open neighborhood
$\tilde{U}$ of $\tilde{x}_{0}$ in $U$ such that for $\tilde{x}\in\tilde{U}$
and $A\in\mathcal{A}$, the map\[
f(t,{\bf p}):={\rm exp}_{\tilde{x}}(\Phi(\tilde{x},(t+T)A(\varphi({\bf p})))),\]
defined for $(t,{\bf p})\in I_{a}\times B_{b}$ is a $C^{\infty}$
diffeomorphism onto its image, and its image is in $U$. Now choose
$\mathcal{V}$ to be an open neighborhood of $(x_{0},v_{0})$ in $T^{1}U$
such that for each $(x,v)\in\mathcal{V}$, the geodesic $\tilde{\gamma}$
with $(\tilde{\gamma}(0),\tilde{\gamma}'(0))=(x,v)$ satisfies $\tilde{x}:=\tilde{\gamma}(-T)\in\tilde{U}$
and there exists $A\in\mathcal{A}$ with $\Phi(\tilde{x},A(\varphi(\mathbf{0})))=\tilde{\gamma}'(-T).$
We also require $\mathcal{V}$ to be small enough so that $\tilde{x}$
is sufficiently close to $\tilde{x}_{0}$ and $A$ can be chosen sufficiently
close to $Id$, so that $f$ is in the neighborhood $\mathcal{F}_{0}$
of $f_{0}$ given in Lemma \ref{foliation merging}. (The hypothesis
(iii) in Lemma \ref{foliation merging} for $f_{0}$, as well as $f_{1},f_{2}$
defined below, follows from the Gauss Lemma.)

Let $(x_{i},v_{i})\in\mathcal{V},$ $i=1,2,$ and suppose $\gamma_{i}:[\hat{r}_{i},\hat{s}_{i}]\rightarrow M$,
$i=1,2$, are $g$-geodesics such that (\ref{eq:merging_interval})
is satisfied and $(\gamma_{i}(0),\gamma_{i}'(0))=(x_{i},v_{i}).$
Let $r_{i},s_{i},$ $i=1,2,$ be as in (\ref{eq:merging_interval}).
For $i=1,2,$ define \[
f_{i}(t,{\bf \mathbf{p}}):={\rm exp}_{\tilde{x}_{i}}(\Phi(\tilde{x}_{i},(t+T)A_{i}(\varphi(\mathbf{p})))),\]
for $(t,\mathbf{p})\in I_{a}\times B_{b},$ where $\tilde{x}_{i}:=\gamma_{i}(-T),$
and $A_{i}\in\mathcal{A}$ is such that $\Phi(\tilde{x}_{i},A_{i}(\varphi(\mathbf{0})))=\gamma_{i}'(-T).$
Then $f_{i}(t,\mathbf{0})=\gamma_{i}(t)$ for $t\in I_{a}.$ From
Lemma \ref{foliation merging}, we obtain $\tilde{g}\in\mathcal{N}$
which agrees with $g$ on $M\setminus U$ so that conclusion (2) of
Lemma \ref{foliation merging} holds. Finally, we define the required
$\tilde{g}$-geodesic $\gamma:[\hat{r}_{1},\hat{s}_{2}]\rightarrow M$
as \[
\gamma(t)=\begin{cases}
\gamma_{1}(t), & \text{if $t\in[\hat{r}_{1},-a]$};\\
\gamma_{\mathbf{0}}(t), & \text{if $t\in(-a,a)$};\\
\gamma_{2}(t), & \text{if $t\in[a,\hat{s}_{2}]$},\end{cases}\]
 where $\gamma_{\mathbf{0}}$ is as in Lemma \ref{foliation merging}(2).
\end{proof}
Lemma \ref{lem:nonconjugacy} below allows us to destroy conjugate
points along a geodesic by making a small perturbation of the metric.
A two-dimensional version of this lemma is contained in \cite{bib2}.
\begin{lem}
\label{lem:nonconjugacy} Let $(M,g)$ be a closed $C^{\infty}$ Riemannian
manifold of dimension $n\ge2$, and let $\mathcal{N}$ be an open
neighborhood of $g$ in the $C^{\infty}$ topology. Let $x,y\in M$
and suppose $\gamma:[0,L]\to M$ is a $g$-geodesic from $x$ to $y$.
Let $0=t_{0}<t_{1}<\cdots<t_{\ell}=L$, where $\ell\ge1$, and define
$z_{k}:=\gamma(t_{k})$ for $k=0,\dots,\ell.$ Suppose $s_{0}\in(t_{j},t_{j+1})$
for some $j\in\{0,\dots,\ell-1\}$ and $u_{0}:=\gamma(s_{0})$ is
not a self-intersection point of $\gamma$ (i.e., $u_{0}\notin\gamma([0,T]\setminus\{s_{0}\})$).
Let $U_{0}$ be an open neighborhood of $u_{0}.$ Then there exists
$\hat{g}\in\mathcal{N}$ that agrees with $g$ on $M\setminus U_{0}$
such that the following conditions hold:
\begin{enumerate}
\item $\gamma$ is also a unit speed geodesic for $\hat{g}$.
\item If $k_{1}$ and $k_{2}$ are integers such that $0\le k_{1}\le j$
and $j+1\le k_{2}\le\ell$, then $z_{k_{1}}$ is not conjugate to
$z_{k_{2}}$ along $\gamma|[t_{k_{1}},t_{k_{2}}]$ in the $\hat{g}$
metric.
\end{enumerate}
\end{lem}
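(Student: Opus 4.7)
My plan is to take $\hat g$ as a one-parameter family $g_\epsilon$ of small, compactly supported perturbations of $g$ inside $U_0$, chosen so that $\gamma$ remains a unit-speed geodesic while the normal Jacobi curvature operator along $\gamma$ is shifted by a small negative multiple of the identity on a subarc containing $u_0$. Equivalently, the Sturm--Liouville operator that governs Dirichlet Jacobi fields along each relevant segment of $\gamma$ is perturbed by addition of a small nonnegative bump; by spectral perturbation theory this shifts every zero eigenvalue of the unperturbed operator to a strictly positive value for small $\epsilon>0$, simultaneously breaking all finitely many conjugacies between $z_{k_1}$ and $z_{k_2}$.

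Since $u_0$ is not a self-intersection point of $\gamma$, I shrink $U_0$ so that $\gamma^{-1}(U_0)$ is a single open interval $(s_0-\delta,s_0+\delta)\subset(t_j,t_{j+1})$ and install Fermi coordinates $(t,\mathbf y)$ on $U_0$ in which $\gamma$ appears as the $t$-axis. Fix a nonnegative bump $\phi\in C_c^\infty((s_0-\delta,s_0+\delta))$ with $\phi\not\equiv 0$, and a cutoff $\chi\in C_c^\infty(\mathbb R^{n-1})$ with $\chi\equiv 1$ near $\mathbf 0$ and small compact support inside the Fermi chart. For $\epsilon\ge 0$, set
\[
g_\epsilon := g + \epsilon\,\phi(t)\,\chi(\mathbf y)\,|\mathbf y|^2\,dt\otimes dt
\]
on $U_0$ and $g_\epsilon := g$ outside. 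The added tensor vanishes to second order at $\mathbf y=\mathbf 0$, so $\gamma$ is a unit-speed $g_\epsilon$-geodesic for every $\epsilon$, and $g_\epsilon\in\mathcal N$ whenever $\epsilon$ is sufficiently small. A direct Fermi-coordinate computation using the standard transverse expansion $g_{tt}(t,\mathbf y)=1-R_{titj}(t)y^iy^j+O(|\mathbf y|^3)$ shows that the normal Jacobi operator for $g_\epsilon$ along $\gamma$, expressed in the Fermi frame, is $R_g(t)-\epsilon\phi(t)\,I_{n-1}$, so the normal Jacobi equation reads $J''+R_g(t)J-\epsilon\phi(t)J=0$.

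For each admissible pair $(k_1,k_2)$ with $0\le k_1\le j$ and $j+1\le k_2\le\ell$, the Dirichlet Jacobi problem along $\gamma|[t_{k_1},t_{k_2}]$ is governed by the self-adjoint Sturm--Liouville operator $T_\epsilon := -\partial_t^2 - R_g + \epsilon\phi$ on normal fields vanishing at the endpoints; conjugacy of $z_{k_1}$ and $z_{k_2}$ in $g_\epsilon$ is equivalent to $\ker T_\epsilon \ne \{0\}$. By standard first-order perturbation theory, the eigenvalues of $T_\epsilon$ near $0$ are $\epsilon\mu_i+O(\epsilon^2)$, where the $\mu_i$ are the eigenvalues of the symmetric bilinear form $(V,W)\mapsto\int\phi\,\langle V,W\rangle\,dt$ restricted to $\ker T_0$. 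For any nonzero $W\in\ker T_0$, $W$ is a nontrivial Jacobi field, hence real-analytic with isolated zeros, so $|W|^2>0$ on an open dense subset and therefore $\int\phi|W|^2\,dt>0$; the form is positive definite, all $\mu_i>0$, and each zero eigenvalue of $T_0$ becomes strictly positive in $T_\epsilon$ for small $\epsilon>0$. The remaining eigenvalues of $T_0$ stay nonzero under the bounded perturbation $\epsilon\phi$, so $\ker T_\epsilon=\{0\}$. Since there are only finitely many admissible pairs, a single sufficiently small $\epsilon>0$ achieves this for all of them while keeping $g_\epsilon\in\mathcal N$; the resulting $\hat g:=g_\epsilon$ satisfies both conclusions. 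The principal delicate step is the positive-definiteness of the first-order eigenvalue shifts, which ultimately rests on the real-analyticity of Jacobi fields.
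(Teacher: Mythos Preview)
Your argument is correct and takes a genuinely different route from the paper's.

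The paper first reduces to the single pair case $\ell=1$ and then invokes its foliation--merging Lemma~\ref{foliation merging}: it sets up two foliations by $g$-geodesics near $u_0$, one coming from the exponential map based at $x$ and one from the exponential map based at a nearby point $\tilde x=\gamma(\delta)$, with $\delta$ chosen so small that $\tilde x$ is not conjugate to $y$ along $\gamma|[\delta,L]$. Merging the two foliations inside $U_0$ produces $\hat g$ for which $\gamma$ is still a geodesic and $\exp_{x,\hat g}$ near $L\gamma'(0)$ coincides, up to a parameter shift, with $\exp_{\tilde x,g}$; the nonconjugacy of $\tilde x$ and $y$ for $g$ then transfers to nonconjugacy of $x$ and $y$ for $\hat g$. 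Your approach instead perturbs the metric directly in Fermi coordinates so as to lower the sectional curvatures in planes containing $\gamma'$ by a bump, and then applies first-order spectral perturbation of the index form to push every zero Dirichlet eigenvalue strictly positive, handling all admissible pairs $(k_1,k_2)$ at once with a single $\epsilon$. This is more self-contained---it does not rely on Lemma~\ref{foliation merging}---and is arguably more transparent from the viewpoint of conjugate points as spectral data; the paper's approach, on the other hand, reuses its central technical tool, which is needed anyway for the remaining lemmas.

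One small correction: you justify $\int\phi\,|W|^2\,dt>0$ by asserting that a nontrivial Jacobi field is real-analytic. Real-analyticity would require the metric itself to be real-analytic, which is not assumed here. What you actually need, and what does hold, is that a nontrivial solution of a second-order linear ODE has only isolated zeros: if $W(t_0)=0$ then $W'(t_0)\ne 0$ by uniqueness, so $|W|^2>0$ on a punctured neighborhood of $t_0$. Since $\phi$ is nonnegative and not identically zero on the interval, the integral is then strictly positive, and the positive-definiteness of the first-order eigenvalue shifts follows as you claim.
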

\begin{proof}
It suffices to prove the lemma for the case $\ell=1$ and $0=t_{0}<s_{0}<t_{1}=L$,
because we can then obtain (2) in the general case through a finite
sequence of perturbations of the metric (within $\mathcal{N}$) corresponding
to each possible pair $(k_{1},k_{2})$ with $0\le k_{1}\le j$ and
$j+1\le k_{2}\le\ell$. Each successive perturbation adds one more
pair $(k_{1},k_{2})$ such that $z_{k_{1}}$ is not conjugate to $z_{k_{2}}$
along $\gamma|[t_{k_{1}},t_{k_{2}}]$, and the perturbations can be
taken small so that no new conjugacies are introduced between such
pairs of points.

We now assume $\ell=1$ and $0=t_{0}<s_{0}<t_{1}=L$ . By perturbing
$s_{0}$ slightly, if necessary, we may assume that $x$ is not conjugate
to $u_{0}$ along $\gamma|[0,s_{0}]$. We may also assume that the
open neighborhood $U_{0}$ of $u_{0}$ is chosen so that $\{t\in[0,L]:\gamma(t)\in U_{0}\}=(s_{0}-\eta,s_{0}+\eta)$
for some $\eta$ with $0<\eta<\min(s_{0},t_{1}-s_{0})$. Let $U$
be an open neighborhood of $x$ disjoint from $U_{0}$. Suppose $\tau\in(0,s_{0}-\eta)$
is such that $\gamma|[0,\tau]$ is one-to-one, and whenever $0<t\le\tau,$
$x$ is not conjugate to $\gamma(t)$ along $\gamma|[0,t]$, and $\gamma(t)$
is not conjugate to $y$ along $\gamma|[t,L].$ Let $E_{1},\dots,E_{n}$
be $C^{\infty}$ vector fields along $\gamma|[0,\tau]$ with $\gamma'(t)=E_{n}(\gamma(t))$
for $t\in[0,\tau].$ Let $\Phi$ and $\varphi$ be as in (\ref{eq:Phi_def})
and (\ref{eq:varphi_def}) for $u\in\gamma([0,\tau]).$ Since $x$
is not conjugate to $u_{0}$ along $\gamma|[0,s_{0}],$ there exist
$\tilde{a},\tilde{b}>0$ such that the map \[
f_{1}(t,\mathbf{p}):=\exp_{x,g}(\Phi(x,(t+s_{0})\varphi(\mathbf{p})),\]
defined for $(t,\mathbf{p})\in I_{\tilde{a}}\times B_{\tilde{b}}$,
is a $C^{\infty}$ diffeomorphism onto its image, and its image is
in $U_{0}.$ (The `$g$' in the subscript indicates we are referring
to the exponential map for the metric $g.$ ) There exist $a,b,\tilde{\delta}$
with $0<a<\tilde{a,}$ $0<b<\tilde{b,}$ $0<\tilde{\delta}<\tau$,
such that the map \[
f_{2}(t,\mathbf{p}):=\exp_{\tilde{x},g}(\Phi(\tilde{x},(t+s_{0}-\delta)\varphi(\mathbf{p})),\]
defined for $(t,\mathbf{p})\in I_{a}\times B_{b}$ is a $C^{\infty}$
diffeomorphism onto its image, and its image is in $U_{0}$ for any
$\tilde{x}:=\gamma(\delta)$ with $0<\delta<\tilde{\delta.}$ Let
$f_{0}$ be the restriction of $f_{1}$ to $I_{a}\times I_{b}$, and
let $\mathcal{F}_{0}$ be as in Lemma \ref{foliation merging}. We
choose $\delta$ sufficiently small so that $f_{2}\in\mathcal{F}_{0}.$
Since $f_{1}(I_{a/2}\times B_{b/2})\cap f_{2}(I_{a/2}\times B_{b/2})$
is a subset of $U_{0}$, Lemma \ref{foliation merging} implies that
there is a $\hat{g}\in\mathcal{N}$ which agrees with $g$ on $M\setminus U_{0}$
and Lemma \ref{foliation merging}(2) holds with $\tilde{g}$ replaced
by $\hat{g}.$ We also obtain Lemma \ref{foliation merging}(3) with
$\tilde{g}$ replaced by $\hat{g}$, because $f_{1}(t,\mathbf{0})=f_{2}(t,\mathbf{0})$
for $t\in I_{a}.$ Therefore $\gamma$ is also a geodesic for $\hat{g}$.
For $\mathbf{p}\in B_{b/4}$, let $\gamma_{\mathbf{p}}$ be as in
Lemma \ref{foliation merging}(2) and define $\sigma_{\mathbf{p}}:[0,L]\to M$
by \begin{equation}
\sigma_{\mathbf{p}}(t)=\begin{cases}
\exp_{x,g}(\Phi(x,t\varphi(\mathbf{p})), & \text{if $t\in[0,s_{0}-a]$};\\
\gamma_{\mathbf{p}}(t-s_{0}), & \text{if $t\in(s_{0}-a,s_{0}+a)$};\\
\exp_{\tilde{x},g}(\Phi(\tilde{x},(t-\delta)\varphi(\mathbf{p})), & \text{if $t\in[s_{0}+a,L]$}.\end{cases}\label{eq:sigma_definition}\end{equation}
 Then $\sigma_{\mathbf{p}}$ is a $\hat{g}$-geodesic that merges,
within $U_{0}$, a $g$-geodesic originating at $x$ with initial
velocity $\Phi(x,\varphi(\mathbf{p}))$ and a $g$-geodesic that is
at $\tilde{x}$ with velocity $\Phi(\tilde{x},\varphi(\mathbf{p}))$
at time $\delta.$ Thus, for $\mathbf{p}\in B_{b/4},$ \begin{equation}
\exp_{x,\hat{g}}(\Phi(x,t\varphi(\mathbf{p})))=\exp_{\tilde{x},g}(\Phi(x,(t-\delta)\varphi(\mathbf{p}))\label{eq:exp_translation}\end{equation}
for $s_{0}+a\le t\le L.$ Since $\tilde{x}$ is not conjugate to $y$
along $\gamma|[\delta,L]$ in the metric $g$, $\exp_{\tilde{x},g}$
is locally a diffeomorphism near $(L-\delta)\gamma'(\delta).$ By
(\ref{eq:exp_translation}), this implies that $\exp_{x,\hat{g}}$
is locally a diffeomorphism near $L\gamma'(0)$. Therefore $x$ is
not conjugate to $y$ along $\gamma$ in the $\hat{g}$ metric.
\end{proof}
A ${\it geodesic\ lasso}$ is defined to be a closed curve which is
a geodesic except at one point, where it fails to be regular. The
following Lemma \ref{lem:avoiding_Z} allows us to perturb a geodesic
so that it avoids a finite set of points on $M$, and it also allows
us to change a closed geodesic to a geodesic lasso.
\begin{lem}
\label{lem:avoiding_Z} Let $(M,g)$ be a closed $C^{\infty}$ Riemannian
manifold of dimension $n\ge2$, and let $\mathcal{N}$ be an open
neighborhood of $g$ in the $C^{\infty}$ topology. Let $x,y\in M$
and suppose $\gamma:[0,L]\to M$ is a $g$-geodesic from $x$ to $y$.
Let $Z$ be a finite set of points in $M$ such that $x,y\in Z$.
Let $\{t\in[0,L]:\gamma(t)\in Z\}=\{t_{k}:k=0,\dots,\ell\},$ where
$0=t_{0}<\cdots<t_{\ell}=L$, $\ell\ge1$, and define $z_{k}:=\gamma(t_{k})$,
for $k=0,\dots\ell.$ Assume that
\begin{enumerate}
\item [(i)]$x$ is not conjugate to $z_{k}$ along $\gamma|[0,t_{k}],$
for $k=1,\dots,\ell.$
\item [(ii)] $z_{k}$ is not conjugate to $y$ along $\gamma|[t_{k},L]$,
for $k=0,\dots,\ell-1.$
\end{enumerate}
Suppose $s_{0}\in(0,L)$, $u_{0}:=\gamma(s_{0})$ is not a self-intersection
point of $\gamma$, and $u_{0}\notin Z$. Let $U_{0}$ be an open
neighborhood of $u_{0}.$ Then there exist open neighborhoods $W_{1}$
and $W_{2}$ of $\gamma'(0)$ and $\gamma'(L)$ in $T_{x}^{1}M$ and
$T_{y}^{1}M$ , respectively, such that for any $w_{1}\in W_{1}\setminus\{\gamma'(0)\}$
and any $w_{2}\in W_{2}\setminus\{\gamma'(L)\}$, there exists $\tilde{g}\in\mathcal{N}$
that agrees with $g$ on $M\setminus U_{0}$ and a $\tilde{g}$-geodesic
$\tilde{\gamma}:[0,L]\to M$ from $x$ to $y$ such that $\tilde{\gamma}'(0)=w_{1},$
$\tilde{\gamma}'(L)=w_{2}$, $\tilde{\gamma}((0,L))\cap Z=\emptyset$,
and $x$ is not conjugate to $y$ along $\tilde{\gamma}$ for $\tilde{g}$.

\end{lem}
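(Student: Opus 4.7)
The plan is to combine Lemma \ref{lem:geodesic_merging}, used to produce a perturbed geodesic matching the prescribed initial and terminal velocities while avoiding $Z$, with Lemma \ref{lem:nonconjugacy}, applied afterwards to destroy any remaining conjugacy of $x$ and $y$ along this new geodesic. After replacing $U_0$ with a smaller convex geodesic ball centered at $u_0$, I may assume $U_0\cap Z=\emptyset$ and $\gamma^{-1}(U_0)=(s_0-\eta,s_0+\eta)\subseteq(t_j,t_{j+1})$ for some $\eta>0$, where $j$ is the unique index with $s_0\in(t_j,t_{j+1})$.

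The key step is the construction of $W_1$, with $W_2$ obtained symmetrically using hypothesis (ii). For each $k\in\{0,\dots,\ell\}$ and each $j$ with $z_j=z_k$, hypothesis (i) (applied with $k$ replaced by $j$) says that $\exp_{x,g}$ is a local diffeomorphism at $t_j\gamma'(0)\in T_xM$, so $(t_j,\gamma'(0))$ is an isolated point of the closed set $S_k:=\{(t,w)\in[0,L]\times T_x^1M:\exp_{x,g}(tw)=z_k\}$; for the degenerate case $z_k=x$, I would first excise a small neighborhood of the trivial stratum $\{0\}\times T_x^1M\subseteq S_0$. The compactness of $[0,L]\times T_x^1M$ then upgrades the isolation at each $(t_j,\gamma'(0))$ to the statement that $\gamma'(0)$ is an isolated point of the projection of $S_k$ onto $T_x^1M$. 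Intersecting over $k$ yields an open neighborhood $W_1'$ of $\gamma'(0)$ such that for every $w_1\in W_1'\setminus\{\gamma'(0)\}$ the $g$-geodesic $\gamma_1(t):=\exp_{x,g}(tw_1)$ satisfies $\gamma_1((0,L))\cap Z=\emptyset$. Shrinking $W_1'$ to $W_1$ further, I can also arrange that for each $w_1\in W_1$, $\gamma_1$ is $C^{\infty}$-close enough to $\gamma$ on $[0,L]$ that $\gamma_1^{-1}(U_0)$ is a single connected interval in $(0,L)$ and $(\gamma_1(s_0),\gamma_1'(s_0))$ lies in the open neighborhood $\mathcal{V}\subseteq T^1U_0$ of $(u_0,\gamma'(s_0))$ provided by Lemma \ref{lem:geodesic_merging}.

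Given $w_1\in W_1\setminus\{\gamma'(0)\}$ and $w_2\in W_2\setminus\{\gamma'(L)\}$, Lemma \ref{lem:geodesic_merging} then produces a metric $\tilde g_0$ in a preselected open neighborhood of $g$ inside $\mathcal{N}$, agreeing with $g$ on $M\setminus U_0$, together with a $\tilde g_0$-geodesic $\tilde\gamma:[0,L]\to M$ merging $\gamma_1$ and $\gamma_2$ inside $U_0$. By construction $\tilde\gamma(0)=x$, $\tilde\gamma(L)=y$, $\tilde\gamma'(0)=w_1$, $\tilde\gamma'(L)=w_2$, and $\tilde\gamma((0,L))\cap Z=\emptyset$. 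Finally, I would apply Lemma \ref{lem:nonconjugacy} to $(M,\tilde g_0)$ and $\tilde\gamma$ with $\ell=1$, $z_0=x$, $z_1=y$, choosing any non-self-intersection point $u_0'=\tilde\gamma(s_0')$ of $\tilde\gamma$ lying in the part of $\tilde\gamma$ inside $U_0$ (such points are dense there because distinct branches of a smooth geodesic meet only transversally, hence on an at-most-discrete set), together with a small open neighborhood $U_0'\subseteq U_0$ of $u_0'$. Working inside a neighborhood of $\tilde g_0$ chosen small enough that the output still lies in the original $\mathcal{N}$, this produces the desired $\tilde g\in\mathcal{N}$ agreeing with $\tilde g_0$ (and hence with $g$) on $M\setminus U_0$ and making $x$ non-conjugate to $y$ along $\tilde\gamma$, while all other required properties are inherited from the fact that $\tilde\gamma$ is unchanged.

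The principal technical obstacle is the construction of $W_1$ and $W_2$. The local isolation statement is an immediate consequence of the inverse function theorem, but upgrading it to global isolation of $\gamma'(0)$ inside the $T_x^1M$-projection of $S_k$ requires the compactness of $[0,L]\times T_x^1M$ together with a careful separation of the boundary cases $z_k=x$ (respectively $z_k=y$), whose exponential preimages contain the degenerate stratum at $t=0$ (respectively $t=L$) that must be handled by hand.
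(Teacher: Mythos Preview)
Your proof is correct and follows the same route as the paper: shrink $U_0$, use the non-conjugacy hypotheses to build $W_1$ and $W_2$ so that $g$-geodesics emanating from $x$ (resp.\ arriving at $y$) with slightly perturbed directions avoid $Z$ on $(0,L)$, merge two such geodesics inside $U_0$ via Lemma~\ref{lem:geodesic_merging}, and finish with Lemma~\ref{lem:nonconjugacy}. The only cosmetic differences are that the paper packages the $W_1$-construction using explicit neighborhoods $V_k\subset T_xM$ on which $\exp_{x,g}$ is a diffeomorphism (rather than your sequential compactness argument on $[0,L]\times T_x^1M$), and it first reduces to the case $Z\subset\gamma([0,L])$---a reduction you should also make, since your iteration over $k\in\{0,\dots,\ell\}$ only addresses points of $Z$ actually lying on $\gamma$.
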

\begin{proof}
We may assume that $Z\subset\gamma([0,L])$. By replacing $U_{0}$
by a smaller open neighborhood of $u_{0}$ if necessary, we may assume
that $U_{0}$ is convex for $g$, $U_{0}\cap Z=\emptyset,$ and $\{t\in[0,L]:\gamma(t)\in U_{0}\}=(s_{0}-\eta,s_{0}+\eta),$
for some $\eta>0.$

Since $x$ is not conjugate to $z_{k}$ along $\gamma|[0,t_{k}]$
for $k=1,\dots,\ell,$ and $\exp_{x,g}$ is locally a diffeomorphism
near $0\in T_{x}M$, there exist neighborhoods $V_{k}$ of $t_{k}\gamma'(0)$
in $T_{x}M$, for $k=0,\dots,\ell$, such that the maps $\exp_{x,g}:V_{k}\to M$
are diffeomorphisms onto their images. Also,\begin{equation}
Z\cap\exp_{x,g}(\{t\gamma'(0):t\in[0,L]\}\setminus(V_{0}\cup\cdots V_{\ell}))=\emptyset,\label{eq:Z_intersect_exp}\end{equation}
because $(\exp_{x,g}^{-1}Z)\cap\{t\gamma'(0):t\in[0,L]\}=\{t_{0}\gamma'(0),\dots,t_{\ell}\gamma'(0)\}.$
By the continuity of $\exp_{x,g}$, we can choose $W_{1}$ sufficiently
small so that (\ref{eq:Z_intersect_exp}) still holds for $\gamma$
replaced by any $g$-geodesic $\gamma_{1}:[0,L]\to M$ with $\gamma_{1}(0)=x$
and $\gamma_{1}'(0)\in W_{1}.$ If $\gamma_{1}'(0)\in W_{1}\setminus\{\gamma'(0)\},$
then $\{t\gamma_{1}'(0):t\in(0,L]\}\cap(V_{0}\cup\cdots\cup V_{\ell})$
does not contain any of $t_{k}\gamma'(0),$ $k=0,\dots,\ell.$ Thus,
(\ref{eq:Z_intersect_exp}) for $\gamma_{1}$ implies that $\gamma_{1}((0,L])\cap Z=\emptyset.$
Similarly, if $W_{2}$ is sufficiently small, then for any $g$-geodesic
$\gamma_{2}:[0,L]\to M$ with $\gamma_{2}(L)=y$ and $\gamma_{2}'(L)\in W_{2}\setminus{\gamma'(L)}$
, we have $\gamma_{2}([0,L))\cap Z=\emptyset.$

Let $v_{0}=\gamma'(s_{0})$ and let $\mathcal{V}$ be an open neighborhood
of $(u_{0},v_{0})$ in $T^{1}U_{0}$ satisfying the conclusion of
Lemma \ref{lem:geodesic_merging} (with $U$ replaced by $U_{0}$
and $x_{0}$ replaced by $u_{0}$). In addition to the requirements
of the preceding paragraph, we require $W_{1}$ and $W_{2}$ to be
sufficiently small so that if $\gamma_{i}:[0,L]\to M$, $i=1,2$,
are such that $\gamma_{1}(0)=x$, $\gamma_{1}'(0)\in W_{1}$, $\gamma_{2}(L)=y$,
and $\gamma_{2}'(L)\in W_{2},$ then there exist $r_{i},s_{i}$ with
$0<r_{i}<s_{0}<s_{i}<L$, such that $\{t\in[0,L]:\gamma_{i}(t)\in U_{0}\}=(r_{i},s_{i})$
and $(\gamma_{i}(s_{0}),\gamma_{i}'(s_{0}))\in\mathcal{V}$.

Suppose $w_{1}\in W_{1}\setminus\{\gamma'(0)\}$ and $w_{2}\in W_{2}\setminus\{\gamma'(L)\}$,
and let $\gamma_{i}:[0,L]\to M$, $i=1,2$, be $g$-geodesics such
that $\gamma_{1}(0)=x$, $\gamma_{1}'(0)=w_{1}$, $\gamma_{2}(L)=y$,
and $\gamma_{2}'(L)=w_{2}$. By Lemma \ref{lem:geodesic_merging},
there exists a metric $\tilde{g}\in\mathcal{N}$ that agrees with
$g$ on $M\setminus U_{0}$ and a $\tilde{g}$-geodesic $\tilde{\gamma}:[0,L]\to M$
that merges $\gamma_{1}$ and $\gamma_{2}$ within $U_{0}.$ Since
$U_{0}\cap Z=\emptyset$ and $\gamma_{1}((0,L))\cap Z=\emptyset=\gamma_{2}((0,L))\cap Z$,
we have $\tilde{\gamma}((0,L))\cap Z=\emptyset$. By Lemma \ref{lem:nonconjugacy}
we can make a small additional perturbation of the metric $\tilde{g}$
within $U_{0}$, if necessary, to arrange for $x$ and $y$ to not
be conjugate along $\tilde{\gamma}.$
\end{proof}

\section{Proof of Theorem \ref{thm:main}}

We now use the results of Section 2 to prove Theorem \ref{thm:main}.
The notation tr$(\gamma)$ will mean the trace of a curve $\gamma:I\to M$,
i.e., tr$(\gamma)=\{\gamma(t):t\in I\}.$
\begin{proof}
Let $(x,y,g)\in M\times M\times\mathbb{G}$, and let $n\in\mathbb{N}$.
We consider the statement $S(x,y,n,g):$ there exist $g$-geodesics
$\gamma_{i}:[0,L_{i}]\rightarrow M$ from $x$ to $y$, $i=1,\dots,n$,
which satisfy the following four properties:
\begin{enumerate}
\item [(i)]If $x\neq y$, then the set of tangent vectors \[
\{\gamma_{1}'(0),\gamma_{2}'(0),\dots,\gamma_{n}'(0)\}\]
 at $x$ are pairwise linearly independent, and the set of tangent
vectors \[
\{\gamma_{1}'(L_{1}),\gamma_{2}'(L_{2}),\dots,\gamma_{n}'(L_{n})\}\]
 at $y$ are pairwise linearly independent. If $x=y$, then the set
of tangent vectors \[
\{\gamma_{1}'(0),\gamma_{1}'(L_{1}),\gamma_{2}'(0),\gamma_{2}'(L_{2}),\dots,\gamma_{n}'(0),\gamma_{n}'(L_{n})\}\]
 are pairwise linearly independent. Thus we cannot join $\gamma_{i}$
to $\gamma_{j}$ smoothly at $x$ or at $y$, for any $i,j\in\{1,\dots,n\}$.
\item [(ii)]For each $i=1,\dots,n$, we have $\gamma_{i}((0,L_{i}))\cap\{x,y\}=\emptyset$.
That is, $\gamma_{i}$ meets $x$ and $y$ only at its endpoints.
\item [(iii)]Any three of $\gamma_{1},\dots,\gamma_{n}$ are concurrent
only at $x$ and at $y$.
\item [(iv)]The point $x$ is not conjugate to $y$ in the metric $g$
along $\gamma_{i}|[0,L_{i}]$, for $i=1,\dots,n$.
\end{enumerate}
We define $\mathcal{H}_{n}(x,y):=\{g\in\mathbb{G}:S(x,y,n,g){\rm \ is\ satisfied}\}$.
We make the following claim:
\begin{claim}
\label{claim1} $(a)$ $\mathcal{H}_{n}(x,y)$ is $C^{\infty}$-dense
in $\mathbb{G}$ and $(b)$ there is a $C^{1}$-open neighborhood
$\mathcal{G}_{n}(x,y)$ of $\mathcal{H}_{n}(x,y)$ in $\mathbb{G}$
such that parts (i), (ii), and (iii) of $S(x,y,n,g)$ are satisfied
for all $g\in\mathcal{G}_{n}(x,y)$.
\end{claim}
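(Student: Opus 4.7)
The plan is to handle (a) and (b) in turn. For (a), fix $g_0 \in \mathbb{G}$ and a $C^\infty$-neighborhood $\mathcal{N}$ of $g_0$. By Serre's theorem (Theorem~\ref{Serre}), there exist infinitely many $g_0$-geodesics from $x$ to $y$, of unbounded length, so their traces cannot all coincide; I extract $n$ of them, call them $\gamma_1,\dots,\gamma_n$, with pairwise distinct traces. Any two such geodesics then meet in a finite set, since a limit intersection point would force them to share a tangent direction and hence coincide on a maximal arc, contradicting distinct traces; similarly the self-intersections of each $\gamma_k$ form a discrete set. For each $k$ I choose an interior non-self-intersection point $u_0^{(k)}$ of $\gamma_k$ and a small open neighborhood $U_k \ni u_0^{(k)}$ that is disjoint from $\{x,y\}$ and from every $\mathrm{tr}(\gamma_j)$, $j \ne k$. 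All perturbations in the argument will be supported inside these mutually disjoint $U_k$'s, so each perturbation leaves the remaining $\gamma_j$'s unchanged as geodesics of the running metric.

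I then process $k = 1,\dots,n$ in order. At step $k$ let $Z_k$ be $\{x,y\}$ together with all pairwise trace intersections of the other $n-1$ geodesics---a finite set. A few applications of Lemma~\ref{lem:nonconjugacy} localized inside $U_k$ secure hypotheses (i) and (ii) of Lemma~\ref{lem:avoiding_Z} at the points of $Z_k \cap \mathrm{tr}(\gamma_k)$. I then apply Lemma~\ref{lem:avoiding_Z} with $U_0 = U_k$ and $Z = Z_k$, picking $w_1 \in W_1 \setminus \{\gamma_k'(0)\}$ and $w_2 \in W_2 \setminus \{\gamma_k'(L_k)\}$ that are moreover linearly independent from the finitely many tangent vectors of the other $\gamma_j$'s at $x$ and $y$ (including both endpoints if $x = y$); such choices exist generically on the unit spheres of dimension $n-1 \ge 1$. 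The output is a new metric in $\mathcal{N}$ (agreeing with the previous metric off $U_k$) and a new geodesic $\tilde\gamma_k$ from $x$ to $y$ with the prescribed endpoint tangents, avoiding $Z_k$ in its interior, and with $x,y$ non-conjugate along it. After $n$ steps, (ii) and (iv) are immediate, (i) is secured by the tangent choices, and (iii) holds because any triple concurrence at $p \notin \{x,y\}$ among $\tilde\gamma_{i_1}, \tilde\gamma_{i_2}, \tilde\gamma_{i_3}$ with $i_1 < i_2 < i_3$ would place $p$ in $Z_{i_3}$ at the moment $\tilde\gamma_{i_3}$ was constructed, contradicting its avoidance of $Z_{i_3}$.

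For (b), fix $g_0 \in \mathcal{H}_n(x,y)$ with witnesses $\gamma_1,\dots,\gamma_n$. The geodesic vector field on $TM$ is built from Christoffel symbols, which depend continuously on $g$ in the $C^1$-topology, so for any initial velocity $v$ and any compact time interval the map $g \mapsto (\gamma_v^g, \dot\gamma_v^g)$ is $C^0$-continuous on $(\mathbb{G}, C^1)$. By (iv), $\exp_{x,g_0}$ is a local diffeomorphism from a small open ball $B_i \ni L_i \gamma_i'(0)$ onto a neighborhood $V_i \ni y$, with $y \notin \exp_{x,g_0}(\partial B_i)$. For $g'$ in a sufficiently small $C^1$-neighborhood of $g_0$, the map $\exp_{x,g'}|_{\overline{B_i}}$ is $C^0$-close to $\exp_{x,g_0}|_{\overline{B_i}}$, hence $y \notin \exp_{x,g'}(\partial B_i)$, and homotopy invariance of Brouwer degree produces some $v_i' \in B_i$ with $\exp_{x,g'}(v_i') = y$; uniqueness of preimage under $\exp_{x,g_0}$ forces $v_i' \to L_i \gamma_i'(0)$ as $g' \to g_0$. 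This yields $g'$-geodesics $\gamma_1',\dots,\gamma_n'$ from $x$ to $y$ converging $C^0$-uniformly to $\gamma_1,\dots,\gamma_n$ with endpoint tangents also converging, without appealing to the implicit function theorem (which would require $C^2$-closeness since $d\exp$ depends on second derivatives of $g$).

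It remains to check that (i), (ii), (iii) pass to such a $C^1$-neighborhood. (i) is direct: endpoint tangents converge and pairwise linear independence is an open condition. For (ii), $C^0$-closeness of the $\gamma_i'$ preserves avoidance of $\{x,y\}$ on any compact subinterval $[\varepsilon, L_i - \varepsilon]$, while near the endpoints a uniform lower bound on the $g'$-injectivity radius together with unit-speed progress rules out interior returns to $\{x,y\}$ inside $(0, \varepsilon]$ or $[L_i - \varepsilon, L_i)$. For (iii), a contradicting sequence $g'_m \to g_0$ in $C^1$ with triple concurrences $p_m \notin \{x,y\}$ would have a subsequential limit at which three limiting $g_0$-geodesics concur; (iii) at $g_0$ then forces $p \in \{x,y\}$, but in a chart centered at $p$ the three relevant perturbed geodesics have pairwise linearly independent initial (or final) tangents by (i) for $g_0$, so three curves of the form $p + s v_k + O(s^2)$ with independent $v_k$ cannot share a common point for small positive $s$. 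Taking $\mathcal{G}_n(x,y)$ to be the union over $g_0 \in \mathcal{H}_n(x,y)$ of these $C^1$-open neighborhoods completes (b). The main obstacle I anticipate is precisely obtaining $C^1$-openness rather than $C^2$-openness: because conjugate-point data depend on second derivatives of $g$ through the Jacobi equation, one must substitute a Brouwer degree argument for the implicit function theorem in the existence step and then rely only on $C^0$-continuity of geodesics (rather than of $d\exp$) to transfer (i)--(iii).
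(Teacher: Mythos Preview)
Your treatment of part~(b) is correct and coincides with the paper's argument: both use that the geodesic flow depends only on first derivatives of $g$, so $\exp_{x,g'}$ varies $C^0$-continuously in the $C^1$ topology on $\mathbb{G}$, and both invoke a Brouwer degree argument (rather than the implicit function theorem, which would need $C^2$-control of $d\exp$) to produce nearby $g'$-geodesics hitting $y$. Your verification of (i)--(iii) by continuity is also in line with the paper.

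Part~(a), however, has two genuine gaps arising from your ``all $n$ at once'' strategy. First, the extraction step is not justified: ``unbounded length, so their traces cannot all coincide'' is false---a single closed geodesic traversed $k$ times, $k=1,2,\ldots$, gives infinitely many geodesics of unbounded length with a common trace. Even granting that not \emph{all} traces coincide, you need $n$ geodesics with pairwise distinct traces \emph{and} such that each carries a point off the union of the others; nothing in Serre's theorem supplies this. The paper circumvents this by induction on $n$: at the inductive step it already has $\gamma_1,\ldots,\gamma_{n-1}$ satisfying (i) and (ii), and it is precisely those properties (pairwise independent endpoint tangents, no interior hits of $\{x,y\}$) that force any further Serre geodesic $\gamma$ to satisfy $\mathrm{tr}(\gamma)\nsubseteq\mathrm{tr}(\gamma_1)\cup\cdots\cup\mathrm{tr}(\gamma_{n-1})$, after which a perturbation localized off $\mathrm{tr}(\gamma_1)\cup\cdots\cup\mathrm{tr}(\gamma_{n-1})$ produces $\gamma_n$.

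Second, your bookkeeping on the perturbation supports breaks. You choose each $U_k$ disjoint from the \emph{original} traces $\mathrm{tr}(\gamma_j)$, $j\neq k$, but once you replace $\gamma_j$ by $\tilde\gamma_j$ (whose initial velocity $w_1\neq\gamma_j'(0)$ means $\tilde\gamma_j$ differs from $\gamma_j$ globally, not just inside $U_j$), nothing prevents $\mathrm{tr}(\tilde\gamma_j)$ from entering $U_k$ for $k>j$; the step-$k$ metric change then destroys $\tilde\gamma_j$ as a geodesic, and your argument for (iii) collapses. (A related issue: the non-conjugacy hypotheses (i)--(ii) of Lemma~\ref{lem:avoiding_Z} require perturbations with $s_0$ in both the first and last subintervals $(t_0,t_1)$ and $(t_{\ell-1},t_\ell)$---the paper applies Lemma~\ref{lem:nonconjugacy} twice, at two different points---whereas your single small $U_k$ sits near one fixed $s_0$.) These problems are avoidable by processing inductively and, at each step, supporting all perturbations away from the \emph{already finalized} geodesics $\gamma_1,\ldots,\gamma_{n-1}$, which is exactly what the paper does.
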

Claim \ref{claim1} implies that the set $\bigcap\mathcal{G}_{n}(x,y)$
is a dense $G_{\delta}$ set for $\mathbb{G}$ with the $C^{k}$ topology,
for $k=1,2,\dots,\infty.$ Suppose $P\subseteq M\setminus\{x,y\}$
is a set with $m$ points, and $g\in\bigcap\mathcal{G}_{n}(x,y)$.
Since $g\in\mathcal{G}_{2m+1}$, we can find $2m+1$ $g$-geodesics
that satisfy (iii). If $P$ were a blocking set for $(x,y)$, then
by the pigeonhole principle, at least three of these geodesics would
pass through the same point in $P$, which leads to contradiction.
Hence there is no finite blocking set for $(x,y)$, and Theorem \ref{thm:main}(1)
follows from Claim \ref{claim1}.

Similarly, if we define $\widetilde{\mathcal{H}}_{n}:=\{(x,y,g)\in M\times M\times\mathbb{G}:S(x,y,n,g){\rm \ is\ satisfied}\}$
and $\widehat{\mathcal{H}}_{n}:=\{(x,g)\in M\times\mathbb{G}:S(x,x,n,g)\}{\rm \ is\ satisfied}\}$,
and we prove the following claims, then Theorem \ref{thm:main}(2),(3)
will follow by considering $\bigcap\widetilde{\mathcal{G}}_{n}$ and
$\bigcap\widehat{\mathcal{G}}_{n}$, respectively.
\begin{claim}
\label{claim2} $(a)$ $\widetilde{\mathcal{H}}_{n}$ is $C^{\infty}$-dense
in $M\times M\times\mathbb{G}$ and $(b)$ there is a $C^{1}$-open
neighborhood $\widetilde{\mathcal{G}}_{n}$ of $\widetilde{\mathcal{H}}_{n}$
in $M\times M\times\mathbb{G}$ such that (i), (ii), and (iii) of
$S(x,y,n,g)$ are satisfied for all $(x,y,g)\in\widetilde{\mathcal{H}}_{n}$.
\begin{claim}
\label{claim3} $(a)$ $\widehat{\mathcal{H}}_{n}$ is $C^{\infty}$-dense
in $M\times\mathbb{G}$ and $(b)$ there is a $C^{1}$-open neighborhood
$\widehat{\mathcal{G}}_{n}$ of $\widehat{\mathcal{H}}_{n}$ in $M\times\mathbb{G}$
such that (i), (ii), and (iii) of $S(x,x,n,g)$ are satisfied for
all $(x,g)\in\widehat{\mathcal{H}}_{n}$.
\end{claim}
\end{claim}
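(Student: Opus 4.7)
My plan is to prove Claim \ref{claim3} by an induction on $n$ that produces the required geodesic loops one at a time, using Lemma \ref{lem:avoiding_Z} (with Lemma \ref{lem:nonconjugacy} invoked first to arrange its hypotheses) to introduce each new loop via a small metric perturbation supported away from the earlier ones. Claims \ref{claim1} and \ref{claim2} admit the same scheme with $x$ replaced by an ordered pair $(x,y)$ and, in Claim \ref{claim2}, with the endpoints also varying over $M\times M$; I focus on Claim \ref{claim3} and remark on the parallel constructions at the end.

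For part (a), suppose inductively that I already have $\tilde g_{k-1}\in\mathcal{N}$ and $\tilde g_{k-1}$-geodesics $\tilde\gamma_1,\dots,\tilde\gamma_{k-1}$ from $x$ to $x$ satisfying (i)--(iv) of $S(x,x,k-1,\tilde g_{k-1})$. Theorem \ref{Serre} furnishes a further $\tilde g_{k-1}$-geodesic $\gamma_k$ from $x$ to $x$ distinct from each $\tilde\gamma_i$. Because two distinct geodesics cannot share an arc (by ODE uniqueness from a point and direction), each intersection $\mathrm{tr}(\tilde\gamma_i)\cap\mathrm{tr}(\tilde\gamma_j)$ is finite, so the set
\[ Z_k := \{x\}\cup\bigcup_{1\le i<j\le k-1}\bigl(\mathrm{tr}(\tilde\gamma_i)\cap\mathrm{tr}(\tilde\gamma_j)\bigr) \]
is finite, and one can choose an interior point $u_0$ of $\gamma_k$ that lies off every earlier trace, off $Z_k$, and is not a self-intersection of $\gamma_k$; pick a small convex $U_0\ni u_0$ disjoint from $\bigcup_{i<k}\mathrm{tr}(\tilde\gamma_i)$. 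Apply Lemma \ref{lem:nonconjugacy} finitely many times inside $U_0$ to secure the non-conjugacy hypotheses of Lemma \ref{lem:avoiding_Z} for $\gamma_k$ with $Z_k$, and then apply Lemma \ref{lem:avoiding_Z}. This yields $\tilde g_k\in\mathcal{N}$ (equal to $\tilde g_{k-1}$ off $U_0$, so leaving $\tilde\gamma_1,\dots,\tilde\gamma_{k-1}$ as $\tilde g_k$-geodesics) and a $\tilde g_k$-geodesic $\tilde\gamma_k$ from $x$ to $x$ whose interior misses $Z_k$, along which $x$ is not self-conjugate, and whose endpoint tangent vectors may be taken inside the open sets $W_1,W_2$ supplied by the lemma and therefore chosen to avoid the finitely many directions already in $\{\tilde\gamma_i'(0),\tilde\gamma_i'(L_i)\}_{i<k}$, securing (i). Taking all perturbations small enough in the $C^\infty$ topology preserves (iv) for the earlier $\tilde\gamma_i$.

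Part (b) exploits condition (iv) directly: for $(x,g)\in\widehat{\mathcal{H}}_n$ with witnesses $\gamma_1,\dots,\gamma_n$, the non-conjugacy of $x$ to itself along each $\gamma_i$ makes $\exp_{x,g}$ a local diffeomorphism at $L_i\gamma_i'(0)$, so the implicit function theorem furnishes, for $(x',g')$ in a $C^1$-neighborhood of $(x,g)$ in $M\times\mathbb{G}$, a nearby $g'$-geodesic from $x'$ to $x'$ for each $i$; since (i), (ii), (iii) are open conditions on tangent vectors and traces, the union of such neighborhoods defines the required $\widehat{\mathcal{G}}_n$. The main obstacle is the inductive step for (a): finiteness of $Z_k$, the existence of a suitable $u_0$, and the compatibility of successive applications of Lemmas \ref{lem:nonconjugacy} and \ref{lem:avoiding_Z} without disturbing the earlier geodesics or their non-conjugacy, must all be verified carefully; and in the $x=y$ setting of Claim \ref{claim3} condition (i) constrains $2n$ rather than $n$ tangent directions at $x$, so the endpoint freedom in Lemma \ref{lem:avoiding_Z} must be used to avoid a larger (though still finite) set of forbidden directions at each step.
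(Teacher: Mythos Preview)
Your argument for part (a) is essentially the paper's own proof of Claim \ref{claim1}(a) specialised to the case $x=y$; the paper then deduces Claims \ref{claim2}(a) and \ref{claim3}(a) immediately by noting density in each fibre $\{(x,y)\}\times\mathbb G$ (resp.\ $\{x\}\times\mathbb G$). Two small repairs are needed. First, to be sure that $\mathrm{tr}(\gamma_k)$ contains a point off $\bigcup_{i<k}\mathrm{tr}(\tilde\gamma_i)$ you must also exclude $\gamma_k=-\tilde\gamma_i$ and use (i),(ii) to rule out $\mathrm{tr}(\gamma_k)\subset\bigcup_{i<k}\mathrm{tr}(\tilde\gamma_i)$, as the paper does. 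Second, a single neighbourhood $U_0$ does not suffice to arrange both hypotheses of Lemma \ref{lem:avoiding_Z}: one application of Lemma \ref{lem:nonconjugacy} with $s_0\in(t_0,t_1)$ yields only the non-conjugacies $z_0\!\not\sim\! z_k$ for $k\ge1$, and a second application with $s_0\in(t_{\ell-1},t_\ell)$ (hence a second neighbourhood) is needed for $z_k\!\not\sim\! z_\ell$, $k\le\ell-1$. The paper carries this out with two disjoint neighbourhoods $U_1,U_2$.

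The real gap is in part (b). You appeal to the implicit function theorem to obtain, for every $(x',g')$ in a $C^1$-neighbourhood of $(x,g)$, a $g'$-geodesic from $x'$ to $x'$ near each $\gamma_i$. But the derivative of $\exp_{x',g'}$ is computed via Jacobi fields, whose governing equation involves the curvature of $g'$ and hence \emph{second} derivatives of the metric. A $C^1$-perturbation of $g$ controls $\exp$ only in $C^0$ and says nothing about its derivative, so the local-diffeomorphism property on which the implicit function theorem relies need not persist; your argument yields at best a $C^2$-open neighbourhood, not the $C^1$-open one asserted. The paper circumvents this with a topological degree argument: since $\exp_{x,g}$ is a local diffeomorphism near $L_i\gamma_i'(0)$, its restriction to the boundary $\partial B_i$ of a small ball has nonzero degree at $y$, and any continuous map $C^0$-close to $\exp_{x,g}|\overline{B_i}$ (in particular $\exp_{x',g'}$ for $g'$ $C^1$-close to $g$ and $x'$ near $x$) retains that degree and therefore still hits $x'$. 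This degree-theoretic step is what secures the $C^1$-openness of $\widehat{\mathcal G}_n$ and $\widetilde{\mathcal G}_n$.
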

We now prove Claim \ref{claim1}$(a)$ by mathematical induction.
For $n=1$, let $\mathcal{N}$ be any non-empty $C^{\infty}$-open
set in $\mathbb{G}$, and let $g\in\mathcal{N}$. Let $\gamma:[0,L]\to M$
be a $g$-geodesic from $x$ to $y$. By restricting the domain of
$\gamma$, if necessary, we may assume that $\gamma((0,L))\cap\{x,y\}=\emptyset.$
Then we let $\ell=1$ and $0=t_{0}<t_{1}=L$ in Lemma \ref{lem:nonconjugacy}.
By Lemma \ref{lem:nonconjugacy}, there exists $\hat{g}\in\mathcal{N}$
such that $\gamma$ is also a unit speed geodesic for $\hat{g}$ and
$x$ is not conjugate to $y$ along $\gamma.$ If $(x,\gamma'(0))\neq(y,\gamma'(L))$,
then we let $g_{1}=\hat{g}$, and $\gamma_{1}=\gamma$. If $(x,\gamma'(0))=(y,\gamma'(L))$,
that is, $\gamma$ is a closed geodesic, then we apply Lemma \ref{lem:avoiding_Z}
to obtain $g_{1}\in\mathcal{N}$ and a $g_{1}$-geodesic lasso $\gamma_{1}:[0,L]\to M$
with $\gamma_{1}(0)=\gamma_{1}(L)=x$ but $\gamma_{1}'(0)\neq\gamma_{1}'(L)$,
and $x\notin\gamma_{1}((0,L))$. Then (i) , (ii), and (iv) are satisfied,
and (iii) is vacuous. Since $\mathcal{N}$ is arbitrary, $\mathcal{H}_{1}(x,y)$
is $C^{\infty}$-dense.

Next we suppose $\mathcal{H}_{n-1}(x,y)$ is $C^{\infty}$-dense for
some $n\geq2$, and we will prove that $\mathcal{H}_{n}(x,y)$ is
$C^{\infty}$-dense. Let $\mathcal{N}$ be any non-empty $C^{\infty}$-open
set in $\mathbb{G}$. There exist $g_{n-1}\in\mathcal{H}_{n-1}(x,y)\cap\mathcal{N}$
and $g_{n-1}$-geodesics $\gamma_{i}:[0,L_{i}]\to M$ from $x$ to
$y$, $i=1,\dots,n-1$, so that properties (i) - (iv) are satisfied
with $n$ replaced by $n-1$. By Theorem \ref{Serre}, there exists
a $g_{n-1}$-geodesic $\gamma:[0,L]\to M$ from $x$ to $y$, distinct
from $\gamma_{1},\dots,\gamma_{n-1}$. If $x=y$, we also require
$\gamma$ to be distinct from $-\gamma_{1},\dots,-\gamma_{n-1},$
where $-\gamma_{i}$ is $\gamma_{i}$ traversed in the opposite direction.

By (i) and (ii), we have ${\rm tr}(\gamma)\nsubseteq{\rm tr}(\gamma_{1})\cup\cdots\cup{\rm tr}(\gamma_{n-1})$.
However, it may happen that ${\rm tr}(\gamma)$ contains one (or more)
of the sets ${\rm tr}(\gamma_{1}),\dots,{\rm tr}(\gamma_{n-1})$.
If $x=y$, then we can restrict the domain of $\gamma$, if necessary,
so that ${\rm tr}(\gamma)$ does not contain any of the sets ${\rm tr}(\gamma_{1}),\dots,{\rm tr}(\gamma_{n-1})$.
If $x\ne y$, then we can restrict the domain of $\gamma$, if necessary,
to obtain a $g_{n-1}$-geodesic from $x$ to $y$ such that one of
the following happens: (a) ${\rm tr}(\gamma)$ does not contain any
of the sets ${\rm tr}(\gamma_{1}),\dots,{\rm tr}(\gamma_{n-1})$;
(b) $\gamma$ consists of one of $\gamma_{1},\dots,\gamma_{n-1}$
preceded by a $g_{n-1}$-geodesic from $x$ to $x$; (c) $\gamma$
consists of one of $\gamma_{1},\dots,\gamma_{n-1}$ followed by a
$g_{n-1}$-geodesic from $y$ to $y$. If (a) holds, then we assume
that ${\rm tr}(\gamma)$ does not contain any of the sets ${\rm tr}(\gamma_{1}),\dots,{\rm tr}(\gamma_{n-1})$,
and the rest of this paragraph can be skipped. So assume that one
of cases (b) or (c) hold, and assume that the domain of $\gamma$
has been restricted so that cases (b) and (c) do not hold for any
further restriction to a proper closed subinterval of the domain.
Let $u_{0}\in{\rm tr}(\gamma)\setminus[{\rm tr}(\gamma_{1})\cup\cdots\cup{\rm tr}(\gamma_{n-1})]$
be such that $u_{0}$ is not a self-intersection point of $\gamma$,
and let $U_{0}$ be an open neighborhood of $u_{0}$ such that $U_{0}\cap[{\rm tr}(\gamma_{1})\cup\cdots\cup{\rm tr}(\gamma_{n-1})]=\emptyset$.
By Lemma \ref{lem:nonconjugacy}, we can make a perturbation of the
$g_{n-1}$ metric within $U_{0}$ such that $\gamma$ remains a geodesic,
the new metric is in $\mathcal{N}$, and neither of $x$ or $y$ is
conjugate to either of $x$ or $y$ along an arc of $\gamma.$ Then
Lemma \ref{lem:avoiding_Z} applies with $Z=\{x,y\}.$ Thus we may
again perturb the metric within $U_{0}$ to produce a new metric $\hat{g}\in\mathcal{N}$
and a $\hat{g}$-geodesics $\hat{\gamma}$ close to $\gamma$ and
different from $\gamma_{1},\dots,\gamma_{n-1}$, such that $\hat{\gamma}$
meets $x$ and $y$ only at its endpoints. In particular, ${\rm tr}(\gamma)$
does not contain any of the sets ${\rm tr}(\gamma_{1}),\dots,{\rm tr}(\gamma_{n-1})$.
Since $U_{0}\cap[{\rm tr}(\gamma_{1})\cup\cdots\cup{\rm tr}(\gamma_{n-1})]=\emptyset$,
$\gamma_{1},\dots,\gamma_{n-1}$ remain geodesics for $\hat{g}$.

From the preceding paragraph, we have a metric $\hat{g}\in\mathcal{N}$
and a $\hat{g}$-geodesic $\hat{\gamma}:[0,L]\to M$ from $x$ to
$y$ such that $\gamma_{1},\dots,\gamma_{n-1}$ are $\hat{g}$-geodesics
and ${\rm tr}(\hat{\gamma})$ does not contain any of the sets ${\rm tr}(\gamma_{1}),\dots,{\rm tr}(\gamma_{n-1})$.
Then ${\rm tr}(\hat{\gamma})\cap[{\rm tr}(\gamma_{1})\cup\cdots\cup{\rm tr}(\gamma_{n-1})]$
is a finite set. If $n=2$, let $Z=\{x,y\}$; if $n>2$, let $Z$
be the collection of all intersection points between the trace of
any two of $\gamma_{1},\dots,\gamma_{n-1}$. From (i) and (ii), we
know that $Z$ is a finite set. We also have $x,y\in Z$. We want
to perturb $\hat{\gamma}$ so that it does not meet $Z$ except at
its endpoints. Let $\hat{\gamma}^{-1}(Z)\cap[0,L]=\{t_{0},\dots,t_{l}\}$,
where $0=t_{0}<\dots<t_{\ell}=L$, and denote $z_{k}:=\hat{\gamma}(t_{k})$,
for $k=0,\dots,\ell.$ Let $s_{1}\in(t_{0},t_{1})$ , $s_{2}\in(t_{\ell-1},t_{\ell})$,
$s_{1}<s_{2}$, $u_{1}:=\hat{\gamma}(s_{1})$, $u_{2}:=\hat{\gamma}(s_{2})$
be such that $u_{1},u_{2}\notin{\rm tr}(\gamma_{1})\cup\cdots\cup{\rm tr}(\gamma_{n-1})$
and $u_{1},u_{2}$ are not self-intersection points of $\hat{\gamma}$.
We can apply Lemma \ref{lem:nonconjugacy} twice with $s_{0}=s_{i}$
and $U_{0}=U_{i}$ for $i=1,2$, where $(U_{1}\cup U_{2})\cap[{\rm tr}(\gamma_{1})\cup\cdots\cup{\rm tr}(\gamma_{n-1})]=\emptyset$.
Thus we obtain a metric $\bar{g}\in\mathcal{N}$ such that $\gamma_{1},\dots,\gamma_{n-1}$
are $\bar{g}$-geodesics, and conditions (i) and (ii) in Lemma \ref{lem:avoiding_Z}
hold for $g$ replaced by $\bar{g}$ and $\gamma$ replaced by $\hat{\gamma}$.
Hence, by Lemma \ref{lem:avoiding_Z}, there is a metric $\tilde{g}\in\mathcal{N}$
such that $\gamma_{1},\dots,\gamma_{n-1}$ are $\tilde{g}$-geodesics,
and there is a $\tilde{g}$-geodesic $\tilde{\gamma}$ from $x$ to
$y$ that is different from $\gamma_{1},\dots,\gamma_{n-1}$, and
does not meet any point of $Z$ except at its endpoints. Moreover,
by Lemma \ref{lem:avoiding_Z}, we may choose $\tilde{g}$ and $\tilde{\gamma}$
so that $x$ and $y$ are not conjugate along $\tilde{\gamma}$ in
the $\tilde{g}$-metric. All of the perturbations of the metric can
be done outside a neighborhood of ${\rm tr}(\gamma_{1})\cup\cdots\cup{\rm tr}(\gamma_{n-1})$.
We let $g_{n}=\tilde{g}.$ Then $\gamma_{1},\dots,\gamma_{n-1}$ are
$g_{n}$-geodesics, and (iv) remains true for $\gamma_{1},\dots,\gamma_{n-1}$
with the metric $g_{n}$. Thus properties (i)-(iv) hold for $\gamma_{1},\dots,\gamma_{n}$,
where $\gamma_{n}=\tilde{\gamma}$, and $g$ is replaced by $g_{n}$.
Since $\mathcal{N}$ is arbitrary, we conclude that $\mathcal{H}_{n}(x,y)$
is $C^{\infty}$-dense. This completes the proof of Claim \ref{claim1}(a).

Claim \ref{claim2}$(a)$ and Claim \ref{claim3}$(a)$ follow from
Claim \ref{claim1}(a), because $\tilde{\mathcal{H}}_{n}$ is $C^{\infty}$-
dense in each fiber $\{(x,y)\}\times\mathbb{G}$, and $\hat{\mathcal{H}}_{n}$
is $C^{\infty}$-dense in each fiber $\{x\}\times\mathbb{G}$.

Next we want to prove Claim \ref{claim1}$(b)$. Let $g\in\mathcal{H}_{n}(x,y)$,
and suppose $\gamma_{1},\dots,\gamma_{n}$ are $g$-geodesics that
satisfy properties (i)-(iv).

If we consider geodesics as curves in $T^{1}M$, then they are solutions
to a system of first order ordinary differential equations whose coefficients
depend only on the first derivatives of the metric. For the purpose
of defining $C^{1}$ distances between the given geodesics $\gamma_{i}$
and nearby curves $\tilde{\gamma}_{i},$ we extend the domain of $\gamma_{i}$
to $[0,L_{i}+1].$ The $C^{1}$ distance will be measured with respect
to the natural metric on $T^{1}M$ induced by $g$. For any $\epsilon>0$
there exists a $C^{1}$ neighborhood $\mathcal{N}_{1}$ of $g$ in
$\mathbb{G}$ and a $\delta=\delta(\epsilon)>0$ such that: if $\hat{g}\in\mathcal{N}_{1}$
and $\tilde{\gamma}_{i}$ is a $\hat{g}$-geodesic with $\tilde{\gamma_{i}}(0)=\gamma_{i}(0)$
and $|\tilde{\gamma_{i}}'(0)-\gamma_{i}'(0))|<\delta$, then the $C^{1}$
distance between $\tilde{\gamma}_{i}|[0,L_{i}+1]$ and $\gamma_{i}|[0,L_{i}+1]$
is less than $\epsilon$. We choose $\epsilon>0$ such that if the
$C^{1}$ distance between $\tilde{\gamma}_{i}|[0,L_{i}+1]$ and $\gamma_{i}|[0,L_{i}+1]$
is less than $\epsilon,$ $|L_{i}-\tilde{L}_{i}|<\epsilon$, and $\tilde{\gamma}_{i}(\tilde{L}_{i})=y$,
then conditions (i)-(iii) hold with $\gamma_{i}$ replaced by $\tilde{\gamma}_{i}$,
and $L_{i}$ replaced by $\tilde{L}_{i}.$

By (iv), $y$ is not $g$-conjugate to $x$ along any of $\gamma_{1},\dots,\gamma_{n}$.
We choose open neighborhoods $U_{1},\dots,U_{n}$ of $L_{1}\gamma_{1}'(0),\dots,L_{n}\gamma_{n}'(0)$
in $T_{x}M$, respectively, and an open neighborhood $U$ of $y$
in $M$, so that \[
\exp_{x,g}:U_{i}\rightarrow U\]
 is a local diffeomorphism, for $i=1,\dots,n$. By replacing $U$
and $U_{i}$ by smaller open neighhorhoods, if necessary, we may assume
that if $\tilde{\gamma}_{i}'(0)\tilde{L}_{i}\in U_{i},$ then $|L_{i}-\tilde{L}_{i}|<\epsilon$
and $|\tilde{\gamma_{i}}'(0)-\gamma_{i}'(0)|<\delta$.

If $B_{i}\subset U_{i}$ is an open ball centered at $L_{i}\gamma_{i}'(0)$
with $\overline{B_{i}}\subset U_{i}$, then $y\notin\exp_{x,g}(\partial B_{i})$
and the topological degree of $\exp_{x,g}|\partial B_{i}$ is nonzero
at $y$. Any continuous map $f_{i}:\overline{B_{i}}\to U$ that is
sufficiently close to $\exp_{x,g}|\overline{B_{i}}$ in the $C^{0}$
topology also satisfies $y\notin f_{i}(\partial B_{i})$ , and the
topological degree of $f_{i}|\partial B_{i}$ is nonzero at $y.$
This implies $y\in f_{i}(B_{i})$. (See, for instance, Theorem 1.1
of \cite{bib15}.) Now we choose a $C^{1}$-open neighborhood $\mathcal{N}_{2}\subset\mathcal{N}_{1}$
of $g$ such that if $\tilde{g}\in\mathcal{N}_{2}$, then $\exp_{x,\tilde{g}}$
is sufficiently $C^{0}$-close to $\exp_{x,g}$ on $\overline{B_{i}}$,
$i=1,\dots,n$, so that there exist $y_{i}\in B_{i}$ with $\exp_{x,\tilde{g}}y_{i}=y.$
For $\tilde{g}\in\mathcal{N}_{2}$, let $\tilde{\gamma_{i}},i=1,\dots,n$,
be $\tilde{g}$-geodesics defined on $[0,\tilde{L}_{i}]$ such that
$\tilde{\gamma}_{i}'(0)\tilde{L}_{i}=y_{i}.$ Then conditions (i)-(iii)
hold for $\gamma_{i},L_{i},g$ replaced by $\tilde{\gamma}_{i},\tilde{L}_{i},\tilde{g}$
, respectively. Thus there exists a $C^{1}$-open neighborhood $\mathcal{G}_{n}$
of $\mathcal{H}_{n}$ such that conditions (i)-(iii) hold for all
$\tilde{g}\in\mathcal{G}_{n}$.

This finishes the proof of Claim \ref{claim1}(b), and thus the proof
of Theorem \ref{thm:main}(1). The proofs of Claims \ref{claim2}(b)
and \ref{claim3}(b) are similar to the proof of Claim \ref{claim1}(b),
except we do not assume that $\tilde{\gamma_{i}}(0)=\gamma_{i}(0).$
This completes the proof of Theorem \ref{thm:main}.

\end{proof}

\end{document}